\newtheorem{thm}{Theorem}
\newtheorem{lem}{Lemma}
\newtheorem{prop}{Proposition}
\numberwithin{equation}{section} \numberwithin{thm}{section}
\numberwithin{lem}{section} \numberwithin{problem}{section}
\numberwithin{cor}{section}
\def\grm{{\mathfrak m}}\def\grM{{\mathfrak M}}\def\grN{{\mathfrak N}}\def\grn{{\mathfrak n}}\def\grp{{\mathfrak p}}\def\grP{{\mathfrak P}}
\newcommand{\mmod}[1]{\,\,(\text{\rm mod}\,\, #1)}
\def\grm{{\mathfrak m}}\def\grM{{\mathfrak M}}\def\grN{{\mathfrak N}}\def\grn{{\mathfrak n}}
\begin{document}
\title[Sums of three cubes]{On Waring's problem in sums of three cubes for smaller powers}
\author[Javier Pliego]{Javier Pliego}
\address{Purdue Mathematical Science Building, 150 N University St, West Lafayette, IN 47907, United States of America}

\email{jpliegog@purdue.edu}
\subjclass[2010]{11P05, 11P55}
\keywords{Waring's problem, Hardy-Littlewood method.}

\begin{abstract} We give an upper bound for the minimum $s$ with the property that every sufficiently large integer can be represented as the sum of $s$ positive $k$-th powers of integers represented as the sum of three positive cubes for the cases $2\leq k\leq 4.$
\end{abstract}
\maketitle

\section{Introduction}
Additive problems involving small powers of positive integers have led to a vast development of new ideas and techniques in the application of the Hardy-Littlewood method which often cannot be extended to the setting of general $k$-th powers. Finding the least number $s$ such that for every sufficiently large integer $n$ then
\begin{equation}\label{war}n=x_{1}^{k}+\ldots+x_{s}^{k},\end{equation}
where $x_{i}\in\mathbb{N},$ might be among the most studied examples. We denote such number $s$ by $G(k)$. Let $\mathscr{C}$ be the set of integers represented as a sum of three positive integral cubes. In this memoir we shall be concerned with the function $G_{3}(k)$, which we define as the minimum $s$ such that (\ref{war}) is soluble with $x_{i}\in\mathscr{C}$, for the cases $2\leq k\leq 4.$

Providing the precise value of $G(k)$ is still an open question for most $k$, the cases $k=2,4$ being precisely the only ones solved. Lagrange showed in 1770 that $G(2)=4$ and Davenport \cite{Dav} proved in 1939 the identity $G(4)=16,$ and though it is believed that $G(3)=4,$ the best current upper bound is $G(3)\leq 7$ due to Linnik \cite{Lin}. 

Not very much is known about $\mathscr{C}$. In fact, it isn't even known whether it has positive density or not, the best current lower bound on the cardinality of the set being $$\mathcal{N}(X)=\lvert\mathscr{C}\cap [1,X]\rvert\gg X^{\beta},$$ where  
$\beta=0.91709477,$ due to Wooley \cite{Woo3}. We note that under some unproved assumptions on the zeros of some Hasse-Weil $L$-functions, Hooley (\cite{Hol1}, \cite{Hol2}) and Heath-Brown \cite{Hea} showed using different procedures that
$$\sum_{n\leq X}r_{3}(n)^{2}\ll X^{1+\varepsilon},$$ where $r_{3}(n)$ is the number of representations of $n$ as sums of three positive integral cubes, which implies by applying a standard Cauchy-Schwarz argument that $\mathcal{N}(X)\gg X^{1-\varepsilon}.$
This lack of understanding of the cardinality of the set also prevents us from understanding its distribution over arithmetic progressions, which often comes into play on the major arc analysis. Therefore, even if the exponents $k=2,4$ are well understood for the original problem, it turns out to be much harder when we restrict the variables to lie on $\mathscr{C}.$ In this paper we establish the following bounds for $G_{3}(k)$.
\begin{thm}\label{thm1.1}
One has $G_{3}(2)\leq 8,$ $G_{3}(3)\leq 17$ and $G_{3}(4)\leq 57.$
\end{thm}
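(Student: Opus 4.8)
To prove Theorem~\ref{thm1.1} the natural tool is the Hardy--Littlewood circle method, arranged so that membership of $\mathscr C$ is recorded through the three cube variables rather than through the (unavailable) fine structure of $\mathscr C$. Fix $k\in\{2,3,4\}$, let $n$ be large, write $X\asymp n^{1/k}$ and $P=X^{1/3}$, set $e(z)=e^{2\pi i z}$, and put
\[
F(\alpha)=\sum_{\substack{u,v,w\ge 1\\ u^3+v^3+w^3\le X}}e\bigl(\alpha(u^3+v^3+w^3)^k\bigr)
\]
(or a smoothly weighted variant). Then $F$ carries the implicit weight $r_3(m)=\#\{(u,v,w)\ge1:u^3+v^3+w^3=m\}$, and since $\sum_{m\le X}r_3(m)\asymp X$ one has $F(0)\asymp P^3\asymp n^{1/k}$; thus the weighted problem behaves like Waring's problem for $k$-th powers with a sequence of full density, even though $\mathscr C$ is sparse, and the role of $\mathcal N(X)\gg X^{\beta}$ is the qualitative one of ensuring that $\mathscr C$ is rich enough to realise the congruence conditions below. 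Since each integral point counted by $R_s(n)=\int_0^1 F(\alpha)^s e(-\alpha n)\,d\alpha$ provides a solution of $n=\sum_{i=1}^s x_i^k$ with $x_i=u_i^3+v_i^3+w_i^3\in\mathscr C$, it suffices to prove that $R_s(n)>0$ for all large $n$ when $s=8,17,57$ for $k=2,3,4$ respectively.

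Dissect $[0,1)$ into major arcs $\grM$ about $a/q$ with $q\le Q=n^{\nu}$ ($\nu>0$ small) and minor arcs $\grm$. On $\grM$, writing $\alpha=a/q+\beta$ and sorting $u,v,w$ into residue classes modulo $q$, one gets $F(a/q+\beta)\approx q^{-3}S(q,a)V(\beta)$, where $S(q,a)=\sum_{u,v,w\bmod q}e((a/q)(u^3+v^3+w^3)^k)$ and $V(\beta)$ is the sum of $e(\beta(u^3+v^3+w^3)^k)$ over the original range; crucially the unknown distribution of $\mathscr C$ in progressions never appears, precisely because the cubes are kept explicit. Grouping $u^3+v^3+w^3$ by its residue and combining a Gauss-sum estimate for $t\mapsto t^k$ with the number of residue classes represented by three cubes yields $|S(q,a)|\ll_\varepsilon q^{3-\delta_k+\varepsilon}$ for $(a,q)=1$ with some $\delta_k>0$, whence the singular series $\mathfrak S(n)=\sum_{q\ge1}q^{-3s}\sum_{(a,q)=1}S(q,a)^s e(-an/q)$ converges absolutely once $s>2/\delta_k$. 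It is positive for $s$ large: sums of three cubes meet every residue class modulo every prime, and although $\mathscr C$ avoids the classes $\pm4\pmod{9}$, the resulting $k$-th powers modulo $9$ --- the sets $\{0,1,4\},\{0,1,8\},\{0,1,7\}$ for $k=2,3,4$ --- still generate $\mathbb Z/9$ additively for the values of $s$ in play, and the analogous checks modulo powers of $2$ and the remaining primes go through (if need be one first fixes a bounded number of the $x_i$ in $\mathscr C$, using the supply $\mathcal N(X)\gg X^{\beta}$, to steer $n$ into favourable classes). With the singular integral $\mathfrak J(n)=\int_{\mathbb R}V(\beta)^s e(-\beta n)\,d\beta\asymp n^{s/k-1}$, this gives $\int_{\grM}F(\alpha)^s e(-\alpha n)\,d\alpha\gg n^{s/k-1}$.

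The crux is $\int_{\grm}|F(\alpha)|^s\,d\alpha=o(n^{s/k-1})$, achieved by combining a pointwise minor-arc bound with a mean value. For the former, Cauchy--Schwarz in one or more blocks of variables followed by repeated Weyl differencing exploits the cancellation of the top-degree terms of $(u^3+v^3+w^3)^k-(u'^3+v'^3+w'^3)^k$: after these steps one is reduced to cubic (or linear) exponential sums in a single variable whose frequencies are $\alpha$ times products of differences of sums of cubes, hence off the cubic major arcs when $\alpha\in\grm$; Weyl's inequality for cubes then saves $P^{1/4-\varepsilon}$ on the bulk of $\grm$, a direct differencing estimate for $(u^3+v^3+w^3)^k$ disposing of the residual ranges, and with the weights controlled via $r_3(m)\ll_\varepsilon m^{1/3+\varepsilon}$ one obtains $\sup_{\alpha\in\grm}|F(\alpha)|\ll n^{1/k-\tau_k}$ for some $\tau_k>0$. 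For the latter one needs $\int_0^1|F(\alpha)|^{2t_k}\,d\alpha\ll n^{2t_k/k-1+\varepsilon}$ for a suitable $t_k$, a count of integral points on $\sum_{i\le t_k}(u_i^3+v_i^3+w_i^3)^k=\sum_{i>t_k}(u_i^3+v_i^3+w_i^3)^k$, proved by an iterated Weyl-differencing (Hua-type) argument on the polynomial $(u^3+v^3+w^3)^k$, or by passing to $m_i=u_i^3+v_i^3+w_i^3$ and combining the classical mean-value theory for $k$-th powers with large moments of cubic Weyl sums. Then
\[
\int_{\grm}|F(\alpha)|^s\,d\alpha\le\Bigl(\sup_{\alpha\in\grm}|F(\alpha)|\Bigr)^{\,s-2t_k}\int_0^1|F(\alpha)|^{2t_k}\,d\alpha\ll n^{\,s/k-1-\tau_k(s-2t_k)+\varepsilon},
\]
which is $o(n^{s/k-1})$ as soon as $s>2t_k$; the bounds $8,17,57$ result from taking $s$ minimal subject to $s>2t_k$, $s>2/\delta_k$, and the local conditions.

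The principal obstacle is the minor-arc mean value. The implicit $r_3$-weights generate dense families of near-diagonal solutions, and preventing the sporadic spikes of $r_3(m)$ --- of size up to $m^{1/3+\varepsilon}$, and closely tied to the only conditionally understood sixth moment $\int_0^1\bigl|\sum_{z\le P}e(\beta z^3)\bigr|^6\,d\beta$ --- from overwhelming the diagonal contribution forces $2t_k$ to be appreciably larger than the Hua-type exponent for pure $k$-th powers. For $k=4$, where the underlying degree is $12$, both the admissible $t_4$ and the saving $\tau_4$ deteriorate markedly; this is exactly what drives the bound for $G_3(4)$ up to $57$, while the milder degrees $6$ and $9$ keep $G_3(2)\le 8$ and $G_3(3)\le 17$.
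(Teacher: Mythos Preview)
Your proposal is a plausible high-level outline of a circle-method attack, but it is not a proof: the two quantitative ingredients on which everything hinges, namely a pointwise minor-arc bound $\sup_{\grm}|F(\alpha)|\ll n^{1/k-\tau_k}$ and a mean value $\int_0^1|F|^{2t_k}\,d\alpha\ll n^{2t_k/k-1+\varepsilon}$, are asserted rather than established, and no values of $\tau_k$ or $t_k$ are produced; the sentence ``the bounds $8,17,57$ result from taking $s$ minimal subject to $s>2t_k$, $s>2/\delta_k$'' therefore has no content. Moreover, the device you describe for the pointwise bound (Cauchy--Schwarz plus repeated Weyl differencing on $(u^3+v^3+w^3)^k$) does not by itself produce a nontrivial saving uniform over $\grm$: after Cauchy--Schwarz the inner sum is a Weyl sum in a single cube variable with frequency $\alpha$ times a difference of $k$-th powers of sums of cubes, and controlling this requires not merely that $\alpha$ be off the major arcs but detailed information about the rational approximations to these secondary frequencies, which you do not supply.

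The paper's argument is structurally different and this difference is exactly what makes the numbers come out. Rather than a single generating function $F$, it uses two: an $s$-fold product of $h(\alpha)=\sum a_x e(\alpha x^k)$ with $a_x$ counting representations as a sum of one free cube and two \emph{smooth} cubes, and a $t$-fold product of $W(\alpha)=\sum_{p}\sum_{h}b_h e(\alpha p^{3k}h^k)$, where $p$ runs over primes in a short dyadic range and $b_h$ is the analogous representation count. The prime structure in $W$ is the key: after Cauchy--Schwarz in $h$ one obtains a double sum over pairs $(p_1,p_2)$ of a Weyl sum in $h$ with frequency $\alpha(p_1^{3k}-p_2^{3k})$, and a chain of transference arguments in the style of Vaughan's iterative method carries the rational approximation to $\alpha$ through to these frequencies, giving a genuine saving of roughly $M^{1/2}H^{-1/24}$ over the minor arcs. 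The smooth cubes in $a_x,b_h$ feed into Wooley's bound $\sum a_x^2\ll P^{13/4-\tau}$, and the $h$-integral is handled not by a Hua-type mean value but by the restriction estimates of Bourgain ($k=2$) and Hughes--Wooley ($k=3$). The counts $(s,t)=(4,4)$ and $(8,9)$ for $k=2,3$ are forced by the balance $H^{t/24}M^{t/2}P^{s/8}\le M^t$; for $k=4$ the paper instead takes $11$ copies of $W$ together with $46$ copies of a smooth Weyl sum in $12$th powers (specialising $x_1^3+x_2^3+x_3^3$ to $3y^3$), using the Vaughan--Wooley tables for $k=12$, which gives $11+46=57$. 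None of these mechanisms is present in your sketch, and without them the specific bounds cannot be recovered.
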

We are far from knowing whether these estimates are good or bad, since the only lower bounds that we have for the above quantities are $4\leq G(3)\leq G_{3}(3)$ and $16=G(4)\leq G_{3}(4).$ For the case $k=2$ though we can actually do better. We take, for convenience, an integer $j\geq 0$, and observe that the only solution to \begin{equation}\label{este}x_{1}^{2}+x_{2}^{2}+x_{3}^{2}+x_{4}^{2}=2^{6+12j}\end{equation} with $x_{i}\in\mathbb{N}$ is $x_{1}=x_{2}=x_{3}=x_{4}=2^{2+6j}.$ This can be seen by taking the equation (\ref{este}) modulo $8$, realising that one must have $2\mid x_{i}$ for every $i$ and iterating the process. However, one has $2^{2+6j}\equiv 4 \mmod{9}$, and no number congruent to $4\mmod{9}$ can be written as the sum of three cubes. Therefore, there are infinitely many numbers for which (\ref{este}) doesn't have any solution with $x_{i}\in\mathscr{C}.$ The preceding remark implies then the bound $5\leq G_{3}(2).$

Our proof of Theorem \ref{thm1.1} is based on the application of the Hardy-Littlewood method. In the setting of this paper, the constraint which prevents us from taking fewer variables is the treatment of the minor arcs discussed in Section \ref{sec2}. In order to analyse them we utilise an argument of Vaughan \cite[Lemma 3.4]{Vau3} to bound certain families of exponential sums over the minor arcs together with non-optimal estimates of sums of the shape \begin{equation*}\sum_{x\leq X}a_{x}^{2},  \ \ \text{where} \  a_{x}={\rm card}\big\{\mathbf{x}\in\mathbb{N}^{3}:\ x=x_{1}^{3}+x_{2}^{3}+x_{3}^{3},\ \ x_{2},x_{3}\in \mathcal{A}(P,P^{\eta})\big\}\end{equation*} with $\eta>0$ being a small enough parameter and
$$\mathcal{A}(Y,R)=\{n\in [1,Y]\cap \mathbb{N}: p\mid n\text{ and $p$ prime}\Rightarrow p\leq R\}.$$ Here, the reader may find it useful to observe that it is a consequence of Vaughan and Montgomery \cite[Theorem 7.2]{Mon} that \begin{equation*}\label{smo}\text{card}\big(\mathcal{A}(P,P^{\eta})\big)=c_{\eta}P+O\big(P/\log P\big)\end{equation*} for some constant $c_{\eta}>0$ that only depends on $\eta$. 
In order to briefly discuss the outcome that follows after applying the argument of Vaughan we introduce the exponential sum \begin{equation}\label{Weig}W(\alpha)=\sum_{M/2\leq p\leq M}\sum_{H/2\leq h\leq H}b_{h}e(\alpha p^{3k}h^{k}),\end{equation} where $M,H>0$ and $(b_{h})_{h}$ are weights which the reader should think of being  $(a_{h})_{h}$ and $p$ runs over prime numbers. In order to make the argument work, the parameters $M$ and $H$ must be subjected to the constraint $\max(M^{5-1/k}, M^{2^{k-1}})\leq H$. The saving over the natural bound $HM$ for $W(\alpha)$ obtained with the method is roughly speaking of size $M^{1/2}H^{-1/24}$, which makes the estimate obtained worse than trivial for $k\geq 5.$ 

Let $\tilde{G}_{3}(k)$ be the minimum number such that for $s\geq \tilde{G}_{3}(k)$, the anticipated asymptotic formula for the number of representations in the shape (\ref{war}) with $x_{i}\in\mathscr{C}$ counted with multiplicities holds. Similarly, let $\tilde{G}(k)$ be the minimum number such that for $s\geq \tilde{G}(k)$, the anticipated asymptotic formula in the classical Waring's problem holds. In a forthcoming paper \cite{Pli}, the author shows that $\tilde{G}_{3}(k)\leq 9k^{2}-k+2$ and $G_{3}(k)\leq 3k\big(\log k+\log\log k+O(1)\big),$ 
which the reader may compare to $\tilde{G}(k)\leq k^{2}-k+O(\sqrt{k})$ due to Bourgain \cite{Bou} and $G(k)\leq k\log k+k\log\log k+O(k)$ due to Wooley \cite{Woo7}. A weakness of the underlying methods employed in that paper is that in most of the sums of three cubes employed in the representation, all but one of the cubes is fixed in the associated analysis.

A naive approach to bounding $G_{3}(k)$ would then be to replace each sum of three cubes by the specialisation $3x^{3}$, and this suggests a bound of the shape $G_{3}(k)\leq G(3k)$. With this idea in mind, the bounds $G(6)\leq 24$ due to Vaughan and Wooley \cite{VauWoo1}, $G(9)\leq 47$ and $G(12)\leq 72$ due to Wooley \cite{Woo6} reveal that the methods used in this memoir improve what would have been the trivial approach and confirms that we are actually using the three integral cubes non-trivially in our argument. For the cases $k=2,3$ we combine the pointwise bound obtained for $W(\alpha)$ over the minor arcs with some restriction estimates involving the coefficients $(a_{m})_{m}$. When $k=4$ we instead use a bound for a mean value of smooth Weyl sums of exponent $12$. The estimate for $W(\alpha)$ obtained here is then robust enough to enable us to gain $15$ variables from the trivial approach over the minor arcs and allows us to prune back to a narrower set of major arcs.

The paper is organized as follows. In Section \ref{sec2} we use Vaughan methods to estimate $W(\alpha)$ and provide bounds for the contribution of the minor arcs which are good enough for our purposes when $k=2,3$. We approximate the generating functions of the problem on a narrower set of major arcs in Section \ref{sec3}. In Sections \ref{sec4}, \ref{sec5} and \ref{sec6} we only consider the exponents $k=2,3$, whereas in Section \ref{sec7} we prove the theorem for $k=4.$ Sections \ref{sec4} and \ref{sec5} are devoted to the study of the singular series and the singular integral respectively. We then prune back to the narrower set of arcs to show a lower bound for the major arc contribution in Section \ref{sec6}.

Unless specified, any lower case letter $\mathbf{x}$ written in bold will denote a triple of integers $(x_{1},x_{2},x_{3})$. We will write $a\leq \mathbf{V}\leq b$ when $a\leq v_{i}\leq b$ for $1\leq i\leq n$. As usual in analytic number theory, for each $x\in\mathbb{R}$ we denote $\exp(2\pi i x)$ by $e(x)$, and for each natural number $q$ then $e(x/q)$ will be written as $e_{q}(x).$ We use $\ll$ and $\gg$ to denote Vinogradov's notation, and write $A\asymp B$ whenever $A\ll B\ll A$. When $\varepsilon$ appears in any bound, it will mean that the bound holds for every $\varepsilon>0$, though the implicit constant then may depend on $\varepsilon$. We adopt the convention that whenever we write $\delta$ in the computations we mean that there exists a positive constant $\delta$ for which the bound holds. We write $p^{r}|| n$ to denote that $p^{r}| n$ but $p^{r+1}\nmid n.$

\emph{Acknowledgements}: The author's work was supported in part by a European Research Council Advanced
Grant under the European Union’s Horizon 2020 research and innovation programme via grant agreement No. 695223 during his studies at the University of Bristol. It was completed while the author was visiting Purdue University under Trevor Wooley's supervision. The author would like to thank him for his guidance and helpful comments, an anonymous referee for useful remarks and both the University of Bristol and Purdue University for their support and hospitality.
\section{Minor arc estimates}\label{sec2}
As mentioned in the introduction, we provide an estimate for the exponential sum $W(\alpha)$ by using methods of Vaughan. We make use of a Hardy-Littlewood dissection and combine both the bound for $W(\alpha)$ and a restriction estimate of a certain mean value to bound the minor arc contribution for the cases $k=2,3$. We also remark that the estimate for $W(\alpha)$ is also used in Sections \ref{sec6} and \ref{sec7} to prune the major arcs back to a narrower set of arcs. Before going into the proof of the main lemma, it is convenient to write
\begin{equation}\label{Saq}S_{k}(q,a)=\sum_{r=1}^{q}e_{q}(ar^{k}).\end{equation} We also introduce the multiplicative function $\tau_{k}(q)$ by defining $\tau_{2}(q)=q^{-1/2}$ and $$\ \ \ \ \ \ \ \ \ \ \ \ \ \  \tau_{k}(p^{uk+v})=p^{-u-1}\ \ \text{when $u\geq 0$ and $2\leq v\leq k$}$$ and $$\tau_{k}(p^{uk+1})=kp^{-u-1/2}\ \ \text{ when $u\geq 0$}$$for $k=3,4.$ Observe that with this definition then one has the bound \begin{equation}\label{tauk}\tau_{k}(q)\ll q^{-1/k},\end{equation} and the proof of Theorem 4.2 of \cite{Vau} yields
\begin{equation}\label{skk}q^{-1}S_{k}(q,a)\ll \tau_{k}(q).\end{equation}
\begin{lem}\label{lema1}
Let $H,M>0$ such that $\max(M^{5-1/k},M^{2^{k-1}})\leq H.$ Let $\alpha\in [0,1)$. Suppose that $\alpha=a/q+\beta$, where $a\in\mathbb{Z}$ and $q\in\mathbb{N}$ with $(a,q)=1$ and such that $q\leq Y,$ and $\lvert \beta\rvert\leq q^{-1}Y^{-1},$ where $Y$ is a parameter in the range $M^{k}\leq Y\leq H^{k}M^{2k}.$ Then the exponential sum $W(\alpha)$, defined in (\ref{Weig}), satisfies
\begin{equation}\label{boundW}W(\alpha)\ll H^{\varepsilon}\Bigg(HM+\frac{\tau_{k}(q)HM^{2}}{1+M^{3k}H^{k}\lvert \alpha-a/q\rvert}\Bigg)^{1/2}\Bigg(\sum_{H/2\leq h\leq H}\lvert b_{h}\rvert^{2}\Bigg)^{1/2}.\end{equation} 
\end{lem}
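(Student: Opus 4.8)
The plan is to estimate $W(\alpha)$ by the standard device of squaring and applying the large sieve / Weyl differencing in the variable $p$, so that the off-diagonal terms become a counting problem controlled by the rational approximation $a/q$ to $k\alpha$ (more precisely to the frequency attached to $p^{3k}h^k$). First I would apply Cauchy--Schwarz in the $h$ summation to pull out $\big(\sum_{H/2\le h\le H}|b_h|^2\big)^{1/2}$, leaving a sum of the form $\big(\sum_{H/2\le h\le H}\big|\sum_{M/2\le p\le M}e(\alpha p^{3k}h^k)\big|^2\big)^{1/2}$ (after also bounding the number of relevant $h$ by $H$, which accounts for part of the $H^\varepsilon$). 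Expanding the square over $p_1,p_2$ and swapping the order of summation, the inner sum over $h$ is a complete-ish exponential sum in $h$ of the shape $\sum_h e\big(\alpha(p_1^{3k}-p_2^{3k})h^k\big)$, a Weyl sum of degree $k$ with leading coefficient $\alpha(p_1^{3k}-p_2^{3k})$.

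Next I would invoke Vaughan's lemma (the reference \cite[Lemma 3.4]{Vau3} alluded to in the introduction), which bounds exactly such a two-dimensional mean value: given the Diophantine data $\alpha=a/q+\beta$ with $q\le Y$ and $|\beta|\le q^{-1}Y^{-1}$, and given that the moduli of the differences $p_1^{3k}-p_2^{3k}$ lie in a controlled range (this is where the hypothesis $M^k\le Y\le H^kM^{2k}$ enters, guaranteeing that the relevant integers are neither too small nor too large relative to $q$), one gets cancellation in the $h$-sum governed by $\tau_k(q)$ and the factor $1+M^{3k}H^k|\beta|$ in the denominator. The diagonal $p_1=p_2$ contributes $HM$ inside the bracket; the off-diagonal contributes $\tau_k(q)HM^2/(1+M^{3k}H^k|\alpha-a/q|)$ once one sums the resulting bound for the Weyl sum over the $O(M^2)$ pairs $(p_1,p_2)$ and the primes, using \eqref{skk}–\eqref{tauk} to convert the Gauss-sum factors $q^{-1}S_k(q,\cdot)$ into $\tau_k(q)$. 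The hypothesis $\max(M^{5-1/k},M^{2^{k-1}})\le H$ is what makes the error terms in the underlying Weyl estimate genuinely of lower order than the main terms recorded in \eqref{boundW}; without it the differencing argument is lossy.

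The main obstacle will be bookkeeping the rational approximation correctly: the frequency in the $h$-sum is $\alpha(p_1^{3k}-p_2^{3k})$, not $\alpha$, so one must track how $a/q$ and $\beta$ interact with the factor $p_1^{3k}-p_2^{3k}$, and show that for "most" pairs $(p_1,p_2)$ the effective denominator is comparable to $q/\gcd(q,p_1^{3k}-p_2^{3k})$, while the exceptional pairs (those for which $q$ nearly divides the difference) are few enough to be absorbed. Handling these exceptional pairs, together with verifying that the sizes $P^{3k}\asymp M^{3k}$ sit correctly inside the admissible window for Vaughan's lemma as dictated by $M^k\le Y\le H^kM^{2k}$, is the delicate part; everything else is routine application of \eqref{skk}, \eqref{tauk}, and Cauchy--Schwarz, with the $H^\varepsilon$ soaking up divisor-function losses from counting solutions of $p_1^{3k}-p_2^{3k}\equiv 0\ (\mathrm{mod}\ d)$.
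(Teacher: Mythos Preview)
Your overall architecture matches the paper's: Cauchy--Schwarz in $h$, expand the square over primes $p_1,p_2$, treat the diagonal as $HM$, and regard the off-diagonal inner sum $\sum_h e\big(\alpha(p_1^{3k}-p_2^{3k})h^k\big)$ as a degree-$k$ Weyl sum. So the strategy is right.

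The gap is in the central step. You write that you would ``invoke Vaughan's lemma \cite[Lemma~3.4]{Vau3}'' as a black box bounding this two-dimensional mean value. That lemma is not directly applicable here; the paper \emph{adapts} its argument rather than citing it. Concretely, for each pair $(p_1,p_2)$ one first picks a rational $b/r$ approximating $\alpha(p_1^{3k}-p_2^{3k})$; if $r>H$ one disposes of the $h$-sum via Weyl's inequality (this is where $M^{2^{k-1}}\le H$ enters), and if $r\le H$ one uses the major-arc approximation for the $h$-sum. The heart of the matter is then a \emph{two-step transference} of denominators built on the factorisation $p_1^{3k}-p_2^{3k}=l\cdot D$ with $l=p_1-p_2$ and $D=\big((p_2+l)^{3k}-p_2^{3k}\big)/l$: one passes from $b/r$ approximating $\alpha l D$ to some $c/s$ approximating $\alpha l$ (showing $r\mid s$ and that $s_0=s/r$ divides $D$, so the count of admissible $p_2$ is $\ll (M/s_0+1)s_0^\varepsilon$), and then from $c/s$ to some $d/t$ approximating $\alpha$ (showing $s\mid t$). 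The hypothesis $M^{5-1/k}\le H$ is used precisely to force $|c/s-b/(rD)|srD<1$ in the first step, and the range $M^k\le Y\le H^kM^{2k}$ is used only at the very end to conclude $d/t=a/q$.

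Your heuristic description via $q/\gcd(q,p_1^{3k}-p_2^{3k})$ and ``exceptional pairs where $q$ nearly divides the difference'' points in the right direction but is not how the argument runs: one does not start from $a/q$ and compute gcd's forward, one starts from the Dirichlet approximation to $\alpha(p_1^{3k}-p_2^{3k})$ and climbs back through the factorisation $l\cdot D$ to recover $a/q$. Without that specific chain of divisibilities the divisor-sum bookkeeping you allude to does not close, and the condition $M^{5-1/k}\le H$ has no obvious entry point. Replace ``invoke Vaughan's lemma'' with this three-stage transference and your sketch becomes the paper's proof.
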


\begin{proof}
For the sake of simplicity we will not write the limits of summation for $p$ and $h$ throughout the rest of the section. We apply Cauchy-Schwarz to obtain
\begin{align}\label{2.2}W(\alpha)&\ll \Big(\sum_{h}\lvert b_{h}\rvert^{2}\Big)^{1/2}\Big(\sum_{h}\sum_{ p_{1},p_{2}}e\big(\alpha(p_{1}^{3k}-p_{2}^{3k})h^{k}\big)\Big)^{1/2}\nonumber
\\
&\ll\Big(\sum_{h}\lvert b_{h}\rvert^{2}\Big)^{1/2}\Big(HM+E(\alpha)\Big)^{1/2},
\end{align}
where the term $HM$ comes from the diagonal contribution and $$E(\alpha)=\sum_{h}\sum_{\substack{p_{2}< p_{1}} }e\big(\alpha(p_{1}^{3k}-p_{2}^{3k})h^{k}\big).$$
In order to estimate $E(\alpha)$ we will follow closely the argument of Vaughan \cite[Lemma 3.4]{Vau3}. For a given pair of primes $(p_{1},p_{2})$ we choose $b,r\in\mathbb{N}$ with $(b,r)=1$ and such that $r\leq 2kH^{k-1}$ and $\lvert\alpha(p_{1}^{3k}-p_{2}^{3k})-b/r\rvert\leq (2k)^{-1} r^{-1}H^{1-k}$. Then if $r>H$, an application of Weyl's inequality \cite[Lemma 2.4]{Vau} yields the bound
$$\sum_{h}e\big(\alpha(p_{1}^{3k}-p_{2}^{3k})h^{k}\big)\ll H^{1-2^{1-k}+\varepsilon}\ll H^{1+\varepsilon}M^{-1},$$ where we used the restriction on $M$ at the beginning of the lemma. If on the other hand $r\leq H$ we combine Lemmata 6.1 and 6.2 of \cite{Vau} with (\ref{skk}) to obtain
$$\sum_{h}e\big(\alpha(p_{1}^{3k}-p_{2}^{3k})h^{k}\big)\ll \frac{\tau_{k}(r)H}{1+H^{k}\big\lvert \alpha(p_{1}^{3k}-p_{2}^{3k})-b/r\big\rvert}+r^{1/2+\varepsilon}.$$Consequently, one has that
$$E(\alpha)\ll E_{0}+H^{1+\varepsilon}M+\sum_{(p_{1},p_{2})}H^{1/2+\varepsilon}\ll E_{0}+H^{1+\varepsilon}M,$$ where
$$E_{0}=\sum_{(p_{1},p_{2})\in\mathcal{A}}\frac{\tau_{k}(r)H}{1+H^{k}\big\lvert \alpha(p_{1}^{3k}-p_{2}^{3k})-b/r\big\rvert}$$ and $\mathcal{A}$ is the set of pairs $(p_{1},p_{2})$ with $p_{2}<p_{1}$ for which $r<(6k)^{-1}M^{k}$ and such that $\big\lvert \alpha(p_{1}^{3k}-p_{2}^{3k})-b/r\big\rvert< 2^{-1}r^{-1/k}MH^{-k}$. Note that the contribution of the pairs for which one of the previous two restrictions doesn't hold is $O(HM).$ For each pair $(p_{1},p_{2})$, define $n=p_{2},$ $l=p_{1}-p_{2}$ and $D=\big((n+l)^{3k}-n^{3k}\big)/l$. Then one finds that
\begin{equation}\label{e0}E_{0}\ll \sum_{(n,l)}\frac{\tau_{k}(r)H}{1+H^{k}\big\lvert \alpha lD-b/r\big\rvert},\end{equation}where $(n,l)$ runs over pairs with $1\leq l\leq M$ and $M/2\leq n\leq M$ such that $(n+l,n)=1$ and satisfying the existing bounds on $r$ and $\big\lvert \alpha lD-b/r\big\rvert$.

We next choose for convenience $c,s\in\mathbb{N}$ satisfying $(c,s)=1$ and with the property that $s\leq H^{k}M^{-k}$ and $\lvert \alpha l-c/s\rvert\leq s^{-1}M^{k}H^{-k}$. By the constraint imposed on $M$ and $H$ at the beginning of the lemma we obtain
$$\Big\lvert \frac{c}{s}-\frac{b}{rD}\Big\rvert sDr< DrM^{k}H^{-k}+\frac{1}{2}sr^{1-1/k}MH^{-k}< \frac{3k}{6k}M^{5k-1}H^{-k}+\frac{1}{2}sM^{k}H^{-k}\leq 1.$$ Therefore, one has $crD=bs$, and hence the coprimality condition on $r$ and $b$ yields $r|s.$ Let $s_{0}= s/r$. We then have that $s_{0}\mid D$, whence
$$E_{0}\ll \sum_{s_{0}\mid s}\tau_{k}\Big(\frac{s}{s_{0}}\Big)\sum_{(n,l)}\frac{H}{1+H^{k}D\big\lvert \alpha l-c/s\big\rvert},$$where the sum on $(n,l)$ runs over the same range described after (\ref{e0}) with the conditions $(n+l,n)=1$ and $\big((n+l)^{3k}-n^{3k}\big)/l\equiv 0\pmod{s_{0}}$. Once we fix $l$ then using the above constraints one has that the number of such $n$ is bounded above by $O\big((M/s_{0}+1)s_{0}^{\varepsilon}\big).$ Consequently, we obtain that $E(\alpha)\ll H^{1+\varepsilon}M+H^{\varepsilon}ME_{1},$ where $$E_{1}=\sum_{l\in\mathcal{L}}\frac{\tau_{k}(s)H}{1+H^{k}M^{3k-1}\big\lvert \alpha l-c/s\big\rvert},$$and $\mathcal{L}$ is the set of integers $l\leq M$ for which $s<M^{k}/2$ and $\lvert \alpha l-c/s\rvert <M^{2-3k}H^{-k}$. Now we choose $d,t$ with $(d,t)=1$ satisfying $t\leq M^{k+1}$ and $\big\lvert \alpha-d/t\big\rvert\leq t^{-1}M^{-k-1}.$ One finds that 
$$\Big\lvert \frac{c}{ls}-\frac{d}{t}\Big\rvert slt< stM^{2-3k}H^{-k}+slM^{-k-1}< \frac{1}{2}M^{3-k}H^{-k}+\frac{1}{2}\leq 1.$$
Therefore, one has $ct=dsl$ and hence $s|t$. Let $t_{0}=t/s$. Then it follows that $t_{0}\mid l$, and on defining $l_{0}=l/t_{0}$ we obtain
$$E_{1}\ll \sum_{t_{0}\mid t}\tau_{k}\Big(\frac{t}{t_{0}}\Big)\sum_{l_{0}\leq M/t_{0}}\frac{H}{1+H^{k}M^{3k-1}l_{0}t_{0}\big\lvert \alpha -d/t\big\rvert}\ll \frac{\tau_{k}(t)HM^{1+\varepsilon}}{1+H^{k}M^{3k}\big\lvert \alpha-d/t\big\rvert}.$$
If either $t\geq M^{k}/2$ or $\lvert \alpha -d/t\rvert\geq 2^{-1}t^{-1/k}H^{-k}M^{1-3k}$ then we get $E_{1}\ll HM^{\varepsilon}$ and we would be done. For the remaining cases one finds that
$$\Big\lvert \frac{a}{q}-\frac{d}{t}\Big\rvert qt< \frac{1}{2}qH^{-k}M^{1-3k}t^{1-1/k}+tY^{-1}< \frac{1}{2}YH^{-k}M^{-2k}+\frac{1}{2}M^{k}Y^{-1}\leq 1,$$which implies that $a=d$ and $q=t$, and yields the bound
$$E(\alpha)\ll H^{1+\varepsilon}M+\frac{\tau_{k}(q)H^{1+\varepsilon}M^{2}}{1+H^{k}M^{3k}\big\lvert \alpha-a/q\big\rvert}.$$The combination of this estimate and (\ref{2.2}) proves the lemma.
\end{proof}
Before describing the application of this lemma in the minor arc treatment it is convenient to introduce some notation. Let $n$ be a natural number and take $P= n^{1/3k}$. Define the parameters \begin{equation}\label{MH}\gamma(k)=\displaystyle\frac{3}{3+\max(5-1/k,2^{k-1})},\ \ \ \ \ \ M=P^{\gamma(k)},\ \ \ \ \ \ H=\max(M^{5-1/k},M^{2^{k-1}}).\end{equation}
Note that these choices for $M$ and $H$ maximize the saving obtained for $W(\alpha)$ over the trivial bound in the previous lemma. Take $$H_{1}=\Big(\frac{1}{2}\Big)^{1/3}H^{1/3},\ \ \ \ \ \ H_{2}=\Big(\frac{2}{3}\Big)^{1/3}H^{1/3},\ \ \ \ \ \ H_{3}=\Big(\frac{1}{6}\Big)^{1/3}H^{1/3}.$$
For every triple $\mathbf{x}\in\mathbb{R}^{3}$, consider the function $T(\mathbf{x})=x_{1}^{3}+x_{2}^{3}+x_{3}^{3}.$
Define the sets $$\mathcal{H}=\Big\{(y,\mathbf{y})\in\mathbb{N}^{3}:\ \ \frac{P}{2}\leq y\leq P,\ \ \ \ \ \mathbf{y}\in\mathcal{A}(P,P^{\eta})^{2} \Big\},$$
$$\mathcal{W}=\Big\{(y,\mathbf{y})\in\mathbb{N}^{3}:\ \ \ H_{1}\leq y\leq H_{2},\ \ \ \ \mathbf{y}\in\mathcal{A}(H_{3},P^{\eta})^{2}\Big\},$$ and the corresponding weights
$$a_{x}=\lvert \{\mathbf{x}\in\mathcal{H}:\ x=T(\mathbf{x})\}\rvert,\ \ \ \ \ \ b_{h}=\lvert \{\mathbf{x}\in\mathcal{W}:\ h=T(\mathbf{x})\}\rvert,$$
where $(b_{h})_{h}$ is the choice that we make for the weights of $W(\alpha)$ in (\ref{Weig}). We use $(a_{x})_{x}$ to define the weighted exponential sum
$$h(\alpha)=\sum_{x\leq 3P^{3}}a_{x}e(\alpha x^{k}).$$

Before describing how $h(\alpha)$ and $W(\alpha)$ play a role in the argument we first show upper bounds on the $L^{2}$-norms of the weights which will be used to estimate the minor arc contribution. Let $X>0$, consider $$f(\alpha;X)=\sum_{x\leq X}e(\alpha x^{3}),\ \ \ \ \ \ \ \ f(\alpha;X;X^{\eta})=\sum_{x\in\mathcal{A}(X,X^{\eta})}e(\alpha x^{3})$$ and define the mean value
$$U(X)=\int_{0}^{1}\lvert f(\alpha;X)\rvert^{2}\lvert  f(\alpha;X;X^{\eta})\rvert^{4}d\alpha.$$ It is a consequence of Wooley  \cite[Theorem 1.2]{Woo3} that $U(X)\ll X^{3+1/4-\tau},$ where $\tau=0.00128432.$ Consequently, on considering the underlying diophantine equations due to orthogonality, it follows that
\begin{equation}\label{Up}\sum_{x\leq 3P^{3}}a_{x}^{2}\leq U(P)\ll P^{3+1/4-\tau},\ \ \ \ \ \ \ \ \sum_{H/2\leq h\leq H} b_{h}^{2}\leq U(H^{1/3})\ll H^{13/12-\tau/3}.\end{equation}
The reader may note that we didn't write the entire decimal expression of $\tau$, so the bound for $U(X)$ holds for a slightly bigger $\tau$. Therefore, whenever we encounter bounds with the mean value $U(X)$ involved, we can omit the parameter $\varepsilon$ in the exponents.

Take $s(k)=2^{k}$ when $k=2,3$ and define $t(k)$ by $t(2)=4$ and $t(3)=9$. For ease of notation we will just write $s$ and $t$ instead of $s(k)$ and $t(k)$ throughout the paper.  Let $R(n)$ be the number of solutions of the equation
$$n=\sum_{i=1}^{t}T(p_{i}\mathbf{x}_{i})^{k}+\sum_{i=t+1}^{s+t}T(\mathbf{x}_{i})^{k},$$ where $\mathbf{x}_{i}\in \mathcal{W}$ for $1\leq i\leq t$ with $M/2\leq p_{i}\leq M$ prime and $\mathbf{x}_{i}\in \mathcal{H}$ for $t+1\leq i\leq s+t$. Note that by orthogonality then $$R(n)=\int_{0}^{1}h(\alpha)^{s}W(\alpha)^{t}e(-\alpha n)d\alpha.$$
Our goal throughout Sections \ref{sec2} to \ref{sec6} is to obtain a lower bound for $R(n)$ for all sufficiently large $n$.
For such purpose, we make use of a Hardy-Littlewood dissection in our analysis. When $1\leq X\leq M^{k}$, we define the major arcs $\grM(X)$ to be the union of 
\begin{equation}\label{kioto}\grM(a,q)=\Big\{ \alpha\in [0,1): \Big\lvert \alpha-a/q\Big\rvert \leq \frac{X}{qn}\Big\}\end{equation} with $0\leq a\leq q\leq X$ and $(a,q)=1$. For the sake of simplicity we write 
$$\grM=\grM(M^{k}),\ \ \ \ \ \ \ \ \  \ \ \ \ \ \grN=\grM\big((6k)^{-1}H^{1/3}\big).$$We define the minor arcs as $\grm=[0,1)\setminus \grM$ and $\grn=[0,1)\setminus \grN$. This dissection remains valid for the case $k=4$ and will be used in Section \ref{sec7}. We then take $\alpha\in\grm$ and observe that by Dirichlet's approximation there exist non-negative integers $a,q$ with $(a,q)=1$ and $1\leq q\leq nM^{-k}$ such that $$\lvert \alpha-a/q\rvert\leq \frac{M^{k}}{qn}.$$Consequently, one has $q>M^{k}$, and hence (\ref{Up}) and Lemma \ref{lema1} yield the bound
\begin{equation}\label{Wu}W(\alpha)\ll H^{1/2+\varepsilon}M^{1/2}\Big(\sum_{h\leq H} b_{h}^{2}\Big)^{1/2}\ll H^{1+1/24-\tau/6}M^{1/2}.\end{equation}In the following proposition we combine this pointwise bound with some restriction estimates to bound the minor arc contribution.
\begin{prop}\label{prop1}When $k=2,3$ then one has that
\begin{equation}\label{min}\int_{\grm}\lvert h(\alpha)\rvert^{s}\lvert W(\alpha)\rvert^{t}d\alpha\ll (HM)^{t}P^{3s-3k-\delta}.\end{equation}

\end{prop}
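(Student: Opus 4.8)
The plan is to split the minor arcs $\grm$ into the "shallow" part close to rationals with small denominator and a genuinely minor part, and on each to combine the pointwise bound~(\ref{Wu}) for $W(\alpha)$ with a mean-value (restriction) estimate that absorbs the remaining powers of $h(\alpha)$ and $W(\alpha)$. The starting observation is that~(\ref{Wu}) already gives
$$
\sup_{\alpha\in\grm}\lvert W(\alpha)\rvert\ll H^{1+1/24-\tau/6}M^{1/2},
$$
so after pulling out $t-2$ copies of this supremum it remains to bound $\int_{\grm}\lvert h(\alpha)\rvert^{s}\lvert W(\alpha)\rvert^{2}\,d\alpha$, or more efficiently a mean value of the form $\int_{0}^{1}\lvert h(\alpha)\rvert^{s}\lvert W(\alpha)\rvert^{2}\,d\alpha$ extended to the whole circle, since the minor-arc restriction has been exploited. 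The factor $\lvert W(\alpha)\rvert^{2}$ is, by orthogonality, a count of the diophantine system $p_{1}^{3k}h_{1}^{k}=p_{2}^{3k}h_{2}^{k}$ weighted by $b_{h}$, which after Cauchy--Schwarz reduces to $HM\sum_{h} b_{h}^{2}\ll HM\cdot H^{13/12-\tau/3}$ together with a harmless off-diagonal term; alternatively one keeps $\lvert W(\alpha)\rvert^{2}$ inside and uses it as a smooth majorant. The essential input is then a restriction estimate for $h(\alpha)$: one needs $\int_{0}^{1}\lvert h(\alpha)\rvert^{s}\,d\alpha\ll P^{3s-3k-\delta'}$ for $s=s(k)=2^{k}$, which should follow from Wooley's bound $U(P)\ll P^{3+1/4-\tau}$ in~(\ref{Up}) via Hölder/Hardy--Littlewood-style interpolation between the trivial $L^{2}$ bound $\sum_{x}a_{x}^{2}\ll P^{3+1/4-\tau}$ and higher moments obtained by feeding the smooth cubic Weyl sum $f(\alpha;P;P^{\eta})$ into known mean-value theorems for sums of cubes (Vaughan--Wooley, or the sharp bounds available for $8$ and for the relevant number of smooth cubes).

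Concretely I would proceed as follows. First, record the pointwise bound~(\ref{Wu}) and extract $\lvert W(\alpha)\rvert^{t}\ll \bigl(H^{1+1/24-\tau/6}M^{1/2}\bigr)^{t-2}\lvert W(\alpha)\rvert^{2}$ on $\grm$. Second, apply Cauchy--Schwarz in the form
$$
\int_{\grm}\lvert h(\alpha)\rvert^{s}\lvert W(\alpha)\rvert^{2}\,d\alpha
\le\Bigl(\int_{0}^{1}\lvert h(\alpha)\rvert^{2s}\,d\alpha\Bigr)^{1/2}\Bigl(\int_{0}^{1}\lvert W(\alpha)\rvert^{4}\,d\alpha\Bigr)^{1/2},
$$
or, if that loses too much, split asymmetrically and use a direct restriction estimate $\int_{0}^{1}\lvert h(\alpha)\rvert^{s}\lvert W(\alpha)\rvert^{2}\,d\alpha\ll HM\,P^{3(s-2)}U(P)$ obtained by replacing $\lvert W(\alpha)\rvert^{2}$ by its diagonal bound and $\lvert h(\alpha)\rvert^{s}$ by $\bigl(\sup\lvert h\rvert\bigr)^{s-2}\lvert h(\alpha)\rvert^{2}$ together with~(\ref{Up}). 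Third, bound $\int_{0}^{1}\lvert W(\alpha)\rvert^{4}\,d\alpha$: writing it out, it counts $p_{1}^{3k}h_{1}^{k}+p_{2}^{3k}h_{2}^{k}=p_{3}^{3k}h_{3}^{k}+p_{4}^{3k}h_{4}^{k}$ with the $h_{i}$ weighted by $b_{h_{i}}$; a standard divisor-function argument gives $\ll (HM)^{2+\varepsilon}\bigl(\sum_{h}b_{h}^{2}\bigr)^{2}\ll (HM)^{2+\varepsilon}H^{13/6-2\tau/3}$. Fourth, bound $\int_{0}^{1}\lvert h(\alpha)\rvert^{2s}\,d\alpha$ or $\int_{0}^{1}\lvert h(\alpha)\rvert^{s}\,d\alpha$ by combining $\sum_{x}a_{x}\ll P^{3}$, $\sum_{x}a_{x}^{2}\ll P^{3+1/4-\tau}$, and the structure $h(\alpha)=\sum a_{x}e(\alpha x^{k})$ with $a_{x}$ supported on sums of three cubes two of which are smooth: here one unfolds $a_{x}$ and reduces to a mean value of $f(\alpha;P)\,f(\alpha;P;P^{\eta})^{2}$ raised to the appropriate power, for which the relevant cubic moment estimates apply. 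Finally, assemble: the powers of $H$ and $M$ coming from $\bigl(H^{25/24-\tau/6}M^{1/2}\bigr)^{t-2}$, from $\int\lvert W\rvert^{4}$, and from $\int\lvert h\rvert^{2s}$ must combine to exactly $(HM)^{t}P^{3s-3k}$ up to a power saving; the arithmetic here is dictated by the choices~(\ref{MH}) of $\gamma(k)$, $M$ and $H$, which were made precisely so that the saving $M^{1/2}H^{-1/24}$ beats the trivial bound, and by the values $t(2)=4$, $s(2)=4$ and $t(3)=9$, $s(3)=8$.

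The main obstacle is the bookkeeping in the final assembly: one must verify that the surplus $\tau$-type savings in $U(P)$ and $U(H^{1/3})$, after being diluted by Cauchy--Schwarz and by raising to the powers $t-2$ and $2s$, still leave a net power saving $P^{-\delta}$ rather than merely breaking even. This is where the constraint $\max(M^{5-1/k},M^{2^{k-1}})\le H$ and the precise exponent $\gamma(k)$ enter decisively: the inequality $\max(5-1/k,2^{k-1})$ equals $5-1/2=4.5$ for $k=2$ and $2^{2}=4$ for $k=3$, so $\gamma(2)=3/7.5=2/5$ and $\gamma(3)=3/7$, fixing $M=P^{2/5}$, $H=M^{4.5}=P^{9/5}$ for $k=2$ and $M=P^{3/7}$, $H=M^{4}=P^{12/7}$ for $k=3$. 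Plugging these in and tracking the exponent of $P$ on both sides of~(\ref{min}) is the only genuinely delicate computation; everything else is an application of results already quoted, namely Lemma~\ref{lema1}, the bound~(\ref{Wu}), and the mean-value estimates~(\ref{Up}) together with classical restriction theory for cubic Weyl sums. A secondary technical point is ensuring that the asymmetric Hölder split is chosen so that no individual factor forces us above $(HM)^{t}P^{3s-3k}$; one should be prepared to interpolate with an $L^{6}$ or $L^{8}$ moment of $h(\alpha)$ rather than using $L^{2}$ and the sup bound alone, for which the smoothness of two of the three cubes in the support of $a_{x}$ is exactly what makes the stronger moments available.
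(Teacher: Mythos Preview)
Your proposal misses the decisive ingredient and overcomplicates the structure. The paper's proof is much shorter: it pulls out \emph{all} $t$ copies of $W(\alpha)$ via the supremum bound~(\ref{Wu}), and then bounds $\int_{0}^{1}\lvert h(\alpha)\rvert^{s}\,d\alpha$ in one stroke by invoking sharp discrete restriction theorems for weighted Weyl sums, namely Bourgain's $\Lambda(4)$ result for squares when $k=2$ and the Hughes--Wooley restriction theorem for cubes when $k=3$. These give
\[
\int_{0}^{1}\lvert h(\alpha)\rvert^{s}\,d\alpha \;\ll\; P^{3s/2-3k+\varepsilon}\Bigl(\sum_{x}a_{x}^{2}\Bigr)^{s/2}\;\ll\;P^{3s-3k+s/8-\delta},
\]
after which the arithmetic closes exactly: $H^{t/24}M^{t/2}P^{s/8}=M^{t}P^{-\xi(k)}$ with $\xi(2)=0$ and $\xi(3)=7/92$, so the $\tau$-saving coming from $U(P)$ supplies the final $P^{-\delta}$.

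Your route has a genuine gap at the mean-value step. First, the target you state, $\int\lvert h\rvert^{s}\ll P^{3s-3k-\delta'}$, is stronger than what is needed or true; the correct bound carries an extra $P^{s/8}$ from $\lVert a\rVert_{2}^{s}$, and this loss is precisely what the $t$-fold $W$-saving absorbs. Second, and more seriously, your proposed derivation of the mean value (``H\"older interpolation from $\sum a_{x}^{2}$'' and ``unfolding $a_{x}$ to reduce to a mean value of $f(\alpha;P)f(\alpha;P;P^{\eta})^{2}$'') does not work: since $h(\alpha)=\sum_{\mathbf{y}\in\mathcal{H}}e\bigl(\alpha T(\mathbf{y})^{k}\bigr)$ carries the outer $k$-th power, it is \emph{not} a product of cubic Weyl sums, so no factorisation into smooth cubic moments is available. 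For $k=2$ the $L^{4}$ restriction bound can be recovered by an elementary divisor argument, but for $k=3$ the $L^{8}$ restriction estimate for cubic phases is genuinely non-trivial and is exactly what the Hughes--Wooley theorem provides; nothing in your proposal substitutes for it. Keeping two copies of $W$ inside and Cauchy--Schwarzing to $\int\lvert h\rvert^{2s}$ only compounds the problem. (Incidentally, your computation of $\gamma(3)$ is off: for $k=3$ one has $\max(5-1/3,\,2^{2})=14/3$, so $\gamma(3)=9/23$, not $3/7$.)
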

\begin{proof}
By Bourgain \cite[(1.6)]{Bou1} when $k=2$ and Hughes and Wooley \cite[Theorem 4.1]{Hug} for the case $k=3$, we find that
$$\int_{0}^{1}\lvert h(\alpha)\rvert^{s}d \alpha\ll P^{3s/2-3k+\varepsilon}\Big(\sum_{x\leq 3P^{3}}a_{x}^{2}\Big)^{s/2}\ll P^{3s-3k+s/8-\delta}.$$ Therefore, an application of the pointwise bound on the minor arcs obtained in (\ref{Wu}) yields the estimate
$$\int_{\grm}\lvert h(\alpha)\rvert^{s}\lvert W(\alpha)\rvert^{t}d \alpha\ll H^{t+t/24}M^{t/2}P^{3s-3k+s/8-\delta}.$$
We define for convenience the parameter $\xi(k)$ as $\xi(2)=0$ and $\xi(3)=7/92$, and deduce that the proposition then follows after noting by (\ref{MH}) that $H^{t/24}M^{t/2}P^{s/8}=M^{t}P^{-\xi(k)}$. For the purpose of this paper, knowing the existence of $\delta>0$ for which (\ref{min}) holds suffices. The reader may observe though that the precise saving over the expected main term that we obtain here is $H^{t\tau/6}P^{\xi(k)+s\tau/2-\varepsilon}$. 
\end{proof}

\section{Approximation of exponential sums over the major arcs}\label{sec3}
We adapt the argument of Vaughan \cite[Theorem 4.1]{Vau} to estimate the difference between the exponential sums $h(\alpha),W(\alpha)$ and their approximations over the major arcs. Let $\mathbf{y}\in [0,P]^{2}$ and set $C_{\mathbf{y}}=y_{1}^{3}+y_{2}^{3}$. Let $\beta\in\mathbb{R}$ and let $p$ be a prime number. Consider the integrals
\begin{equation}\label{esp}v_{\mathbf{y}}(\beta)=\int_{P/2}^{P}e\Big(\beta\big(x^{3}+C_{\mathbf{y}}\big)^{k}\Big)dx\ \ \ \ \text{and}\ \ \ \ v_{\mathbf{y},p}(\beta)=\int_{H_{1}}^{H_{2}}e\Big(\beta p^{3k}\big(x^{3}+C_{\mathbf{y}}\big)^{k}\Big)dx.\end{equation}
Note that by a change of variables one finds that \begin{equation}\label{vyp}v_{\mathbf{y}}(\beta)=\int_{M_{\mathbf{y}}}^{N_{\mathbf{y}}}B_{\mathbf{y}}(\gamma)e(\beta \gamma)d\gamma,\ \ \ \ \ \ v_{\mathbf{y},p}(\beta)=\int_{M_{\mathbf{y},p}}^{N_{\mathbf{y},p}}B_{\mathbf{y},p}(\gamma)e(\beta \gamma)d\gamma,\end{equation}where the limits of integration taken are $M_{\mathbf{y}}=\big(P^{3}/8+C_{\mathbf{y}}\big)^{k},$ $N_{\mathbf{y}}=\big(P^{3}+C_{\mathbf{y}}\big)^{k}$, $M_{\mathbf{y},p}=\big(Hp^{3}/2+C_{p\mathbf{y}}\big)^{k}$ and $N_{\mathbf{y},p}=\big(2Hp^{3}/3+C_{p\mathbf{y}}\big)^{k}$, and the functions inside the integral are defined as
\begin{equation}\label{fun}B_{\mathbf{y}}(\gamma)=\frac{1}{3k}\gamma^{1/k-1}(\gamma^{1/k}-C_{\mathbf{y}})^{-2/3},\ \ \ \ B_{\mathbf{y},p}(\gamma)=\frac{1}{3kp}\gamma^{1/k-1}(\gamma^{1/k}-C_{p\mathbf{y}})^{-2/3}.\end{equation}
We introduce the auxiliary multiplicative function $w_{k}(q)$ defined for prime powers by taking
\begin{equation}\label{wuok}w_{k}(p^{3ku+v})=\left\{
	       \begin{array}{ll}
         p^{-u-v/3k}\ \ \ \ \ \ \text{when $u\geq 1$ and $1\leq v\leq 3k$,}   \\
         p^{-1}\ \ \ \ \ \ \ \  \ \ \ \ \ \text{when $u=0$ and $2\leq v\leq 3k$,}    \\
         p^{-1/2}\ \ \ \ \ \ \  \ \ \ \ \text{when $u=0$ and $v=1$.}    \\
    \end{array}  \right. \end{equation}
In order to discuss the approximation of $f(\alpha)$ on the major arcs, it is convenient to consider for $a\in\mathbb{Z}$ and $q\in\mathbb{N}$ with $(a,q)=1$ the sums
\begin{equation}\label{ggg}S_{\mathbf{y}}(q,a)=\sum_{r=1}^{q}e_{q}\Big(a\big(r^{3}+C_{\mathbf{y}}\big)^{k}\Big)\ \ \ \ \ \text{and}\ \ \ \ \ V(\alpha,q,a)=q^{-1}\sum_{\mathbf{y}}S_{\mathbf{y}}(q,a)v_{\mathbf{y}}(\beta),\end{equation} where $\mathbf{y}$ runs over the set $\mathcal{A}(P,P^{\eta})^{2}$ of pairs of smooth numbers.
\begin{lem}\label{lema2}
Suppose that $a\in\mathbb{Z}$ and $q\in\mathbb{N}$ with $(a,q)=1$. Let $\alpha\in [0,1)$ and $\beta=\alpha-a/q$. Then we have the estimate
$$h(\alpha)-V(\alpha,q,a)\ll P^{2}q^{1+\varepsilon}w_{k}(q)(1+n\lvert\beta\rvert)^{1/2}.$$
Moreover, if $\lvert\beta\rvert\leq (2\cdot 3^{k}kq)^{-1}Pn^{-1}$ one finds that
\begin{equation}\label{faci}h(\alpha)-V(\alpha,q,a)\ll P^{2}q^{1+\varepsilon}w_{k}(q).\end{equation}

\end{lem}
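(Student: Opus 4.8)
The plan is to adapt the classical Weyl-sum approximation argument (Vaughan, \cite[Theorem 4.1]{Vau}) to the weighted sum $h(\alpha)=\sum_{x}a_{x}e(\alpha x^{k})$, where the weight $a_{x}$ counts $\mathbf{x}\in\mathcal{H}$ with $x=T(\mathbf{x})$. The key structural observation is that $h(\alpha)$ disassembles as a sum over the two smooth variables $\mathbf{y}\in\mathcal{A}(P,P^{\eta})^{2}$ of the single-variable Weyl sum
\begin{equation*}
h_{\mathbf{y}}(\alpha)=\sum_{P/2\leq x\leq P}e\big(\alpha(x^{3}+C_{\mathbf{y}})^{k}\big),
\end{equation*}
so that $h(\alpha)=\sum_{\mathbf{y}}h_{\mathbf{y}}(\alpha)$. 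First I would write $\alpha=a/q+\beta$, split $x$ into residue classes $x\equiv r\pmod q$ with $1\leq r\leq q$, and replace $(x^{3}+C_{\mathbf{y}})^{k}\pmod q$ by $(r^{3}+C_{\mathbf{y}})^{k}$; this produces the complete sum $S_{\mathbf{y}}(q,a)$ defined in (\ref{ggg}) times an inner sum over $x$ in an arithmetic progression that is then compared, via partial summation or Euler--Maclaurin, to the integral $v_{\mathbf{y}}(\beta)$ of (\ref{esp}). Summing over $\mathbf{y}$ assembles exactly $V(\alpha,q,a)=q^{-1}\sum_{\mathbf{y}}S_{\mathbf{y}}(q,a)v_{\mathbf{y}}(\beta)$.

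Next I would control the two error sources. The approximation of the progression sum by the integral, carried out for each $\mathbf{y}$, costs $O(q(1+n|\beta|)^{1/2})$ per pair $\mathbf{y}$ after using the standard bound for exponential integrals/sums with a polynomial phase of degree $3k$ and leading derivative of size roughly $n|\beta|$ on the interval $[P/2,P]$ — this is where the factor $(1+n|\beta|)^{1/2}$ enters, exactly as in the classical case. The complete-sum input is $q^{-1}|S_{\mathbf{y}}(q,a)|\ll q^{\varepsilon}w_{k}(q)$, which one derives by multiplicativity from the prime-power evaluation of $w_{k}$ in (\ref{wuok}); the shape of $w_{k}$ (with the $p^{-1/2}$ at $v=1$ and $p^{-u-v/3k}$ otherwise) is precisely what a Hua/Weyl-type bound for $S_{\mathbf{y}}(p^{3ku+v},a)$ yields, and crucially this bound is uniform in the shift $C_{\mathbf{y}}$. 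Since $\mathbf{y}$ ranges over a set of size $O(P^{2})$, summing the two contributions over $\mathbf{y}$ gives
\begin{equation*}
h(\alpha)-V(\alpha,q,a)\ll P^{2}\big(q^{\varepsilon}w_{k}(q)+q(1+n|\beta|)^{1/2}\big)\ll P^{2}q^{1+\varepsilon}w_{k}(q)(1+n|\beta|)^{1/2},
\end{equation*}
where the last step absorbs $q$ into $q^{1+\varepsilon}w_{k}(q)$ using $w_{k}(q)\gg q^{-1}$ (valid since $w_{k}(p^{v})\geq p^{-1}$ for all $v\geq 1$), which is admissible at the crude level demanded by the statement.

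For the second, sharper assertion (\ref{faci}), I would observe that when $|\beta|\leq (2\cdot 3^{k}kq)^{-1}Pn^{-1}$ the phase derivative of $(x^{3}+C_{\mathbf{y}})^{k}$ stays below $1/q$ across the interval, so the integral approximation is already accurate to $O(1)$ per class (not $O((1+n|\beta|)^{1/2})$), and the aggregate error over the $O(P^{2})$ pairs is $O(P^{2}q^{1+\varepsilon}w_{k}(q))$ directly, with the $(1+n|\beta|)^{1/2}$ factor now bounded. The main obstacle I anticipate is not any single estimate but the bookkeeping of uniformity in the shift $C_{\mathbf{y}}$: one must check that the exponential-sum and exponential-integral bounds (Weyl-type for $S_{\mathbf{y}}(q,a)$, van der Corput/stationary-phase-free for $v_{\mathbf{y}}(\beta)$) hold with implied constants independent of $\mathbf{y}\in[0,P]^{2}$, since $C_{\mathbf{y}}$ can be as large as $2P^{3}$ and enters the polynomial coefficients. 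This is where the argument of \cite[Theorem 4.1]{Vau} must be adapted rather than quoted verbatim, but the adaptation is routine because $C_{\mathbf{y}}$ affects only lower-order coefficients of the phase polynomial, leaving the leading behaviour — hence the Weyl differencing and the derivative bounds — unchanged.
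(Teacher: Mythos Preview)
Your final absorption step is algebraically wrong and hides a real gap. You claim that $q\ll q^{1+\varepsilon}w_{k}(q)$ follows from $w_{k}(q)\gg q^{-1}$, but $w_{k}(q)\gg q^{-1}$ only yields $q^{1+\varepsilon}w_{k}(q)\gg q^{\varepsilon}$, not $\gg q$. Concretely, for $q=p$ prime one has $w_{k}(p)=p^{-1/2}$, so $q^{1+\varepsilon}w_{k}(q)\asymp p^{1/2+\varepsilon}\ll p=q$. Hence your error term $P^{2}q(1+n|\beta|)^{1/2}$ is genuinely \emph{larger} than the target $P^{2}q^{1+\varepsilon}w_{k}(q)(1+n|\beta|)^{1/2}$, and the lemma as stated is not established.

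The reason the factor $w_{k}(q)$ never appears in your error is that splitting into residue classes and comparing each progression sum to $q^{-1}v_{\mathbf y}(\beta)$ by ``partial summation or Euler--Maclaurin'' uses only the \emph{untwisted} sum $S_{\mathbf y}(q,a)$, and that sum sits in the main term $V(\alpha,q,a)$, not in the error. (In fact plain Euler--Maclaurin does not even deliver the exponent $1/2$: the per-class error is $O(1+n|\beta|)$, not its square root.) The paper instead performs the full Fourier expansion on $\mathbb{Z}/q\mathbb{Z}$, introducing the \emph{twisted} complete sums
\[
S_{\mathbf y}(q,a,b)=\sum_{r=1}^{q}e_{q}\bigl(a(r^{3}+C_{\mathbf y})^{k}+br\bigr)
\]
together with the integrals $I_{\mathbf y}(b)=\int_{P/2}^{P}e\bigl(\beta(\gamma^{3}+C_{\mathbf y})^{k}-b\gamma/q\bigr)\,d\gamma$. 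The $w_{k}(q)$ in the error then comes from the bound $S_{\mathbf y}(q,a,b)\ll q^{1+\varepsilon}w_{k}(q)$ \emph{uniformly in $b$}, proved prime-power by prime-power with the case $q=p$ requiring Weil's bound $S_{\mathbf y}(p,a,b)\ll p^{1/2}$; the factor $(1+n|\beta|)^{1/2}$ arises separately from the estimate $\sum_{b\neq 0}|I_{\mathbf y}(b)|\ll q^{1+\varepsilon}(1+n|\beta|)^{1/2}$ via the derivative-size classification in Vaughan's Theorem~4.1. Your sketch invokes neither mechanism, and the bound on the untwisted $S_{\mathbf y}(q,a)$ that you do quote contributes nothing to the error analysis.
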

\begin{proof}
Let $b\in\mathbb{N}$ and $\mathbf{y}\in\mathcal{A}(P,P^{\eta})^{2}.$ We define \begin{equation}\label{Syt}S_{\mathbf{y}}(q,a,b)=\sum_{r=1}^{q}e_{q}\big(a\big(r^{3}+C_{\mathbf{y}}\big)^{k}+br\big)\ \ \ \ \ \text{and}\ \ \ \ \ I_{\mathbf{y}}(b)=\int_{P/2}^{P}e\big(F(\gamma;b)\big)d\gamma,\end{equation}where the function in the argument inside the integral is taken to be
$$ F(\gamma;b)=\beta\big(\gamma^{3}+C_{\mathbf{y}})^{k}-b\gamma/q.$$ Both the complete exponential sum and the integral play a role in the analysis of the main and the error term. Observe that $h(\alpha)$ can be written as
$$h(\alpha)=\sum_{\mathbf{y}\in\mathcal{A}(P, P^{\eta})}h_{\mathbf{y}}(\alpha),\ \ \ \ \ \ \ \text{with}\ \ \ \ \ \ \ \ h_{\mathbf{y}}(\alpha)=\sum_{P/2\leq x\leq P}e\big(\alpha(x^{3}+C_{\mathbf{y}})^{k}\big).$$ 
Then by sorting the summation into arithmetic progressions modulo $q$ and applying orthogonality, it follows that
$$h_{\mathbf{y}}(\alpha)=q^{-1}\sum_{-q/2<b\leq q/2}S_{\mathbf{y}}(q,a,b)\sum_{P/2\leq x\leq P}e\big(F(x;b)\big),$$ whence using Vaughan \cite[Lemma 4.2]{Vau} we obtain
\begin{align}\label{hu}h_{\mathbf{y}}(\alpha)-q^{-1}S_{\mathbf{y}}(q,a)v_{\mathbf{y}}(\beta)=
&q^{-1}\sum_{\substack{-B< b\leq B\\ b\neq 0}} S_{\mathbf{y}}(q,a,b)I_{\mathbf{y}}(b)\nonumber
\\
&+O\Big(\log(H+2)q^{-1}\sum_{-q/2< b\leq q/2} \big\lvert S_{\mathbf{y}}(q,a,b)\big\rvert\Big),
\end{align}
 where $B=(H+1/2)q$ and $H=\big\lceil 3^{k}kP^{-1}n\lvert\beta\rvert +1/2\big\rceil.$ 
Note that by the quasi-multiplicative property, in order to bound $S_{\mathbf{y}}(q,a,b)$ it suffices to consider the case when $q$ is a prime power. For such purposes, we take $q=p^{3ku+v}$. We observe first that by Vaughan \cite[Theorem 7.1]{Vau} one has that
$S_{\mathbf{y}}(q,a,b)\ll q^{1-1/3k+\varepsilon}.$ Moreover, when $v\geq 2$ and $u=0$ we can deduce from the proof of the same theorem\footnote{See in particular the argument following \cite[(7.16)]{Vau}} that $S_{\mathbf{y}}(p^{v},a,b)\ll p^{v-1}$. For the case $q=p,$ the work of Weil\footnote{See Schmidt \cite[Corollary 2F]{Sch} for an elementary proof of this bound.} \cite{Wei2} yields the estimate $S_{\mathbf{y}}(p,a,b)\ll p^{1/2}.$ Therefore, combining these bounds with the definition (\ref{wuok}) one finds that
\begin{equation}\label{wee}S_{\mathbf{y}}(q,a,b)\ll q^{1+\varepsilon}w_{k}(q).\end{equation} Consequently, by (\ref{hu}) we have
\begin{equation}\label{hyh}h_{\mathbf{y}}(\alpha)-q^{-1}S_{\mathbf{y}}(q,a)v_{\mathbf{y}}(\beta)\ll q^{\varepsilon}w_{k}(q)\sum_{\substack{-B< b\leq B\\ b\neq 0}}\lvert I_{\mathbf{y}}(b)\rvert+q^{1+\varepsilon}w_{k}(q)\log(H+2).\end{equation}

To treat the sum on the right handside we use the methods of the proof of Vaughan \cite[Theorem 4.1]{Vau}. In his analysis he classifies the range of integration of $I(b)$ according to the size of $\lvert G'(\gamma)\rvert$, where $$G(\gamma)=\beta\gamma^{k}-b\gamma/q\ \ \ \ \ \  \text{and}\ \ I(b)=\displaystyle\int_{0}^{X}e\big(G(\gamma)\big)d\gamma.$$ We follow Vaughan's analysis closely, dividing the range of integration of $I_{\mathbf{y}}(b)$ according to the size of $\lvert F'(\gamma;b)\rvert,$ to obtain $$\sum_{\substack{-B< b\leq B\\ b\neq 0}}\lvert I_{\mathbf{y}}(b)\rvert\ll q^{1+\varepsilon}(1+n\lvert \beta\rvert )^{1/2}.$$ Since $\log(H+2)\ll (1+n\lvert\beta\rvert)^{1/2}$ then
\begin{equation*}h_{\mathbf{y}}(\alpha)-q^{-1}S_{\mathbf{y}}(q,a)v_{\mathbf{y}}(\beta)\ll q^{1+\varepsilon}w_{k}(q)(1+n\lvert\beta\rvert)^{1/2},\end{equation*}
which implies the first statement of the lemma by summing over $\mathbf{y}\in\mathcal{A}(P,P^{\eta})^{2}$.
Note that when $\lvert\beta\rvert\leq (2\cdot 3^{k}kq)^{-1}Pn^{-1}$ and $b\neq 0$ one has $\lvert F'(x;b)\rvert\geq \lvert b\rvert/2q$ and $H=1$. Observing that $F'(x;b)$ is monotonous then partial integration yields $$\sum_{\substack{-B< b\leq B\\ b\neq 0}}\lvert I_{\mathbf{y}}(b)\rvert\ll \sum_{\substack{-B< b\leq B\\ b\neq 0}}\frac{q}{\lvert b\rvert}\ll q^{1+\varepsilon}.$$ Combining this estimate with (\ref{hyh}) and summing over $\mathbf{y}\in\mathcal{A}(P,P^{\eta})^{2}$ we get (\ref{faci}).
\end{proof}
By applying similar methods we can obtain the same type of approximation for the exponential sum $W(\alpha).$ For $a\in\mathbb{Z}$ and $q\in\mathbb{N}$ with $(a,q)=1$ and recalling (\ref{esp}) and (\ref{ggg}) we introduce the auxiliary function \begin{equation}\label{Wa}W(\alpha,q,a)=q^{-1}\sum_{\mathbf{y},p}S_{p\mathbf{y}}(q,a)v_{\mathbf{y},p}(\beta),\ \ \ \ \text{where}\ \mathbf{y}\in\mathcal{A}(H_{3},P^{\eta})^{2}\ \text{and}\  M/2\leq p\leq M.\end{equation}
\begin{lem}\label{lema3}
Suppose that $(a,q)=1$ and $(p,q)=1$ for all primes with $M/2\leq p\leq M$. Let $\alpha\in [0,1)$ and $\beta=\alpha-a/q$. Then we have the estimate 
$$W(\alpha)-W(\alpha,q,a)\ll MH^{2/3}q^{1+\varepsilon}w_{k}(q)(1+n\lvert\beta\rvert)^{1/2}(\log P)^{-1}.$$
Moreover, if $\lvert\beta\rvert\leq (6kq)^{-1}H^{1/3}n^{-1}$ one finds that
$$W(\alpha)-W(\alpha,q,a)\ll MH^{2/3}q^{1+\varepsilon}w_{k}(q)(\log P)^{-1}.$$
\end{lem}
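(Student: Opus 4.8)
The plan is to run the proof of Lemma \ref{lema2} essentially verbatim, the only real modifications being that the interval $(P/2,P]$ is replaced by $(H_{1},H_{2}]$, the shift $C_{\mathbf{y}}$ by $C_{p\mathbf{y}}=p^{3}C_{\mathbf{y}}$, the set of smooth pairs $\A(P,P^{\eta})^{2}$ by $\A(H_{3},P^{\eta})^{2}$, and that one carries an extra summation over the primes $p\in[M/2,M]$. The two inputs that make the estimates come out with the stated shape are the identity $M^{3}H=P^{3}=n^{1/k}$, which is immediate from the choices in (\ref{MH}), and the prime number theorem, which is what eventually produces the factor $(\log P)^{-1}$.

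First I would unfold the exponential sum. Since $b_{h}=\lvert\{\mathbf{x}\in\W:h=T(\mathbf{x})\}\rvert$ and $p^{3k}h^{k}=(p^{3}h)^{k}$, one has $W(\alpha)=\sum_{\mathbf{y},p}W_{\mathbf{y},p}(\alpha)$, where $\mathbf{y}$ runs over $\A(H_{3},P^{\eta})^{2}$, $p$ over primes with $M/2\le p\le M$, and
\[W_{\mathbf{y},p}(\alpha)=\sum_{H_{1}\le y\le H_{2}}e\big(\alpha(p^{3}y^{3}+C_{p\mathbf{y}})^{k}\big).\]
By (\ref{esp}) we have $v_{\mathbf{y},p}(\beta)=\int_{H_{1}}^{H_{2}}e\big(\beta(p^{3}x^{3}+C_{p\mathbf{y}})^{k}\big)\,dx$, so by (\ref{Wa}) the function $W(\alpha,q,a)$ is precisely the natural main term of $\sum_{\mathbf{y},p}W_{\mathbf{y},p}(\alpha)$. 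For a fixed admissible pair $(\mathbf{y},p)$ I would then sort the summation over $y$ into residue classes modulo $q$, apply orthogonality together with Vaughan \cite[Lemma 4.2]{Vau}, exactly as in the derivation of (\ref{hu}). Because $(p,q)=1$, replacing $r$ by $p^{-1}r$ modulo $q$ inside the complete sums that arise shows that they equal $S_{p\mathbf{y}}(q,a,b)$ from (\ref{Syt}) (with the $b=0$ term equal to $S_{p\mathbf{y}}(q,a)$), so the estimate (\ref{wee}) applies verbatim; here one uses that (\ref{wee}) only relies on $C_{p\mathbf{y}}$ being an integer, so the fact that $p\mathbf{y}$ fails to be $P^{\eta}$-smooth plays no role. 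Setting $\Phi(y;b)=\beta(p^{3}y^{3}+C_{p\mathbf{y}})^{k}-by/q$ and $I_{\mathbf{y},p}(b)=\int_{H_{1}}^{H_{2}}e\big(\Phi(y;b)\big)\,dy$, this yields
\[W_{\mathbf{y},p}(\alpha)-q^{-1}S_{p\mathbf{y}}(q,a)v_{\mathbf{y},p}(\beta)\ll q^{\varepsilon}w_{k}(q)\sum_{0<\lvert b\rvert\le(\kappa+1/2)q}\lvert I_{\mathbf{y},p}(b)\rvert+q^{1+\varepsilon}w_{k}(q)\log(\kappa+2),\]
where $\kappa=\big\lceil CnH^{-1/3}\lvert\beta\rvert+1/2\big\rceil$ for a suitable constant $C=C(k)$.

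The choice of $\kappa$ is forced by the following observation: since $M^{3}H=P^{3}=n^{1/k}$, on the ranges in play one has $p^{3}y^{3}+C_{p\mathbf{y}}\le M^{3}H=n^{1/k}$ and hence the derivative of $\beta(p^{3}y^{3}+C_{p\mathbf{y}})^{k}$ is $\ll\lvert\beta\rvert nH^{-1/3}$, so that $H^{-1/3}$ here takes over the role of $P^{-1}$ in the proof of Lemma \ref{lema2}. Dividing the range of integration of $I_{\mathbf{y},p}(b)$ according to the size of $\lvert\Phi'(y;b)\rvert$, exactly as Vaughan does in \cite[Theorem 4.1]{Vau}, gives $\sum_{0<\lvert b\rvert\le(\kappa+1/2)q}\lvert I_{\mathbf{y},p}(b)\rvert\ll q^{1+\varepsilon}(1+n\lvert\beta\rvert)^{1/2}$, and since $\log(\kappa+2)\ll(1+n\lvert\beta\rvert)^{1/2}$ we obtain
\[W_{\mathbf{y},p}(\alpha)-q^{-1}S_{p\mathbf{y}}(q,a)v_{\mathbf{y},p}(\beta)\ll q^{1+\varepsilon}w_{k}(q)(1+n\lvert\beta\rvert)^{1/2}.\]
The first estimate of the lemma then follows on summing over the $\ll H_{3}^{2}\ll H^{2/3}$ admissible $\mathbf{y}$ and over the $\ll M(\log P)^{-1}$ primes $p\in[M/2,M]$, the latter count being the prime number theorem together with $\log M\asymp\log P$.

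For the second estimate, when $\lvert\beta\rvert\le(6kq)^{-1}H^{1/3}n^{-1}$ the derivative bound above gives $CnH^{-1/3}\lvert\beta\rvert\le1/2$, whence $\kappa=1$ and the $b$-range is $\lvert b\rvert\le3q/2$; moreover $\lvert\Phi'(y;b)\rvert\ge\lvert b\rvert/2q$ for $b\ne0$, since the contribution of the $\beta$-term to $\Phi'$ is at most $1/2q$. As $\Phi'(y;b)$ is monotone, partial integration gives $\lvert I_{\mathbf{y},p}(b)\rvert\ll q/\lvert b\rvert$, so that $\sum_{0<\lvert b\rvert\le3q/2}\lvert I_{\mathbf{y},p}(b)\rvert\ll q^{1+\varepsilon}$ and $W_{\mathbf{y},p}(\alpha)-q^{-1}S_{p\mathbf{y}}(q,a)v_{\mathbf{y},p}(\beta)\ll q^{1+\varepsilon}w_{k}(q)$; summing over $(\mathbf{y},p)$ as before yields the claim. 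I do not expect a substantial obstacle here: the computations are routine, and the only points requiring genuine care are the bookkeeping around $(p,q)=1$ that allows one to recover the complete sums $S_{p\mathbf{y}}(q,a,b)$ (and so reuse (\ref{wee}) without re-proving it), and the verification of the normalisation $M^{3}H=P^{3}$, which is exactly what makes $(H_{1},H_{2}]$ behave like $(P/2,P]$ with $P$ replaced by $H^{1/3}$.
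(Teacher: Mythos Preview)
Your proposal is correct and follows essentially the same route as the paper's proof: decompose $W(\alpha)$ over $(\mathbf{y},p)$, sort into residue classes modulo $q$, use $(p,q)=1$ to recognise the complete sums as $S_{p\mathbf{y}}(q,a,b)$ so that (\ref{wee}) applies, and then run the Vaughan \cite[Theorem 4.1]{Vau} argument exactly as in Lemma \ref{lema2}, summing over $\mathbf{y}\in\mathcal{A}(H_{3},P^{\eta})^{2}$ and over primes $p\in[M/2,M]$ to pick up the factors $H^{2/3}$ and $M(\log P)^{-1}$. The only cosmetic difference is that the paper leaves the complete sum in the form $S_{\mathbf{y}}(q,ap^{3k},b)$ and performs the change of variables only for the $b=0$ term to identify the main term with $W(\alpha,q,a)$, whereas you change variables throughout; since (\ref{wee}) is uniform in $b$ this makes no difference.
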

\begin{proof}In the same way as before, we can express the exponential sum $W(\alpha)$ as
$$W(\alpha)=\sum_{\substack{\mathbf{y},p}}W_{\mathbf{y},p}(\alpha),\ \ \ \ \ \ \ \text{where}\ \  \ W_{\mathbf{y},p}(\alpha)=\sum_{H_{1}\leq x\leq H_{2}}e\big(\alpha p^{3k}(x^{3}+C_{\mathbf{y}})^{k}\big).$$ 
Sorting the summation into arithmetic progressions modulo $q$ and applying orthogonality one has that
$$W_{\mathbf{y},p}(\alpha)=q^{-1}\sum_{-q/2<b\leq q/2}S_{\mathbf{y}}(q,ap^{3k},b)\sum_{H_{1}\leq x\leq H_{2}}e\Big(\beta p^{3k}(x^{3}+C_{\mathbf{y}})^{k}-\frac{bx}{q}\Big).$$ Recalling that $(q,p)=1$ then a change of variables yields $S_{\mathbf{y}}(q,ap^{3k})=S_{p\mathbf{y}}(q,a).$ Therefore, the application of the argument of Vaughan \cite[Theorem 4.1]{Vau} in the same way as we did above leads to
\begin{equation*}\label{Wy}W_{\mathbf{y},p}(\alpha)-q^{-1}S_{p\mathbf{y}}(q,a)v_{\mathbf{y},p}(\beta)\ll q^{1+\varepsilon}w_{k}(q)(1+n\lvert\beta\rvert)^{1/2},\end{equation*} and if $\lvert\beta\rvert\leq (6kq)^{-1}H^{1/3}n^{-1}$ then $$W_{\mathbf{y},p}(\alpha)-q^{-1}S_{p\mathbf{y}}(q,a)v_{\mathbf{y},p}(\beta)\ll q^{1+\varepsilon}w_{k}(q),$$
which delivers the desired result by summing over the range of $(\mathbf{y},p)$ described in (\ref{Wa}).
\end{proof}

\section{Treatment of the singular series}\label{sec4}
Unless specified, in this section and the two upcoming ones we assume that $k=2,3.$ We introduce some exponential sums and present upper bounds which we obtain making use of the arguments in Vaughan \cite[Theorem 7.1]{Vau}. We also discuss the congruence problem and introduce some divisibility constraints on $C_{\mathbf{y}_{i}}$ and $C_{p_{i}\mathbf{y}_{i}}$ to ensure local solubility. For the rest of the paper, unless specified, $\mathbf{Y}=(\mathbf{y}_{1},\ldots, \mathbf{y}_{s+t})\in \mathbb{N}^{2s+2t}$ and $\mathbf{p}=(p_{1},\dots,p_{t})$ will denote tuples with $\mathbf{y}_{i}\in\mathcal{A}(P,P^{\eta})^{2}$ for $t+1\leq i\leq s+t$ and $\mathbf{y}_{i}\in\mathcal{A}(H_{3},P^{\eta})^{2}$ for $1\leq i\leq t$, where $p_{i}$ are primes satisfying $M/2\leq p_{i}\leq M.$ Take $q\in\mathbb{N}$ and define \begin{equation*}\label{Syp}S_{\mathbf{Y},\mathbf{p}}(q)=q^{-s-t}\sum_{\substack{a=1\\ (a,q)=1}}^{q}e(-an/q)\prod_{i=1}^{t} S_{p_{i}\mathbf{y}_{i}}(q,a)\prod_{i=t+1}^{s+t} S_{\mathbf{y}_{i}}(q,a).\end{equation*}The following technical lemma provides a straightforward upper bound for the previous exponential sum and will be used throughout the major arc treatment.
\begin{lem}\label{expo}
Assume that $2\leq k\leq 4.$ Let $m\geq 2$. Take $\alpha\leq\frac{m-1}{3k}$ when $m\geq 3$ and $\alpha=0$ for $m=2$. Let $Q\geq 1$. Then, recalling (\ref{wuok}) one has 
$$\sum_{q\leq Q}q^{\alpha}w_{k}(q)^{m}\ll Q^{\varepsilon}.$$Moreover, for the case $k=4$ we also have
\begin{equation}\label{sum}\sum_{q\leq Q}q\tau_{4}(q)^{4}w_{4}(q)\ll Q^{\varepsilon},\end{equation} where $\tau_{4}(q)$ was defined just before Lemma \ref{lema1}.
\end{lem}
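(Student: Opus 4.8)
The key point is that both $w_k(q)$ and $\tau_4(q)$ are multiplicative, so the sum over $q\le Q$ can be bounded by a truncated Euler product over primes $p$, and then each local factor should be a convergent Dirichlet series. First I would recall the definitions: $w_k$ on prime powers is given by \eqref{wuok}, so that roughly $w_k(p^{j})\asymp p^{-\lceil j/(3k)\rceil}$ except that $w_k(p)=p^{-1/2}$; and $\tau_k$ from just before Lemma~\ref{lema1} satisfies $\tau_k(q)\ll q^{-1/k}$ by \eqref{tauk}. The overall strategy for the first bound is to show $\sum_{j\ge 1} p^{\alpha j}w_k(p^{j})^{m}\ll 1+O(p^{-1-\delta})$ for some fixed $\delta>0$ uniformly in $p$, so that $\prod_{p\le Q}(\cdots)\ll Q^{\varepsilon}$ (indeed $O(1)$, but $Q^{\varepsilon}$ suffices and absorbs the $\varepsilon$ that one might prefer to keep for safety).

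\textbf{First bound.} Write $q=\prod p^{j_p}$ and use multiplicativity of $q^{\alpha}w_k(q)^{m}$ to get
$$\sum_{q\le Q}q^{\alpha}w_k(q)^{m}\le \prod_{p\le Q}\Bigl(1+\sum_{j\ge 1}p^{\alpha j}w_k(p^{j})^{m}\Bigr).$$
For the term $j=1$ one has $p^{\alpha}w_k(p)^{m}=p^{\alpha-m/2}$; since $m\ge 2$ and $\alpha\le (m-1)/(3k)\le (m-1)/6$, one checks $\alpha-m/2\le (m-1)/6-m/2=-m/3-1/6<-1$, so this term is $O(p^{-1-1/6})$. For $j\ge 2$ write $j=3ku+v$; if $u\ge 1$ then $w_k(p^{j})^{m}=p^{-m(u+v/3k)}$ and $p^{\alpha j}w_k(p^{j})^{m}=p^{(3ku+v)\alpha-m(u+v/(3k))}$, and since $\alpha\le (m-1)/(3k)$ the exponent is at most $(3ku+v)\frac{m-1}{3k}-m u-\frac{mv}{3k}=-u-\frac{v}{3k}<0$ with $u\ge 1$, hence $O(p^{-1})$ and summing the geometric-type series over $u\ge 1$, $1\le v\le 3k$ gives $O(p^{-1})$ total; if $u=0$ and $2\le v\le 3k$ then $w_k(p^{v})=p^{-1}$ and the exponent is $v\alpha-m\le 3k\cdot\frac{m-1}{3k}-m=-1$, giving $O(p^{-1})$ from the $\le 3k-1$ such terms. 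Combining, $\sum_{j\ge 1}p^{\alpha j}w_k(p^{j})^{m}\ll p^{-1}$ for all $p$, actually the $j\ge 2$ contribution is only $O(p^{-1})$ and not obviously summable; to be safe I would instead bound $1+\sum_{j\ge 1}(\cdots)\le \exp\bigl(O(p^{-1})\bigr)$ and note $\prod_{p\le Q}\exp(O(p^{-1}))=\exp(O(\log\log Q))\ll Q^{\varepsilon}$, which is exactly the claimed bound; alternatively, a cleaner route is to observe that the $j\ge 2$ terms are $O(p^{-2})$ once one uses $\alpha\le (m-1)/(3k)$ more carefully together with $v\le 3k$, whereupon the Euler product converges and the answer is $O(1)$. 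Either way the first assertion follows.

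\textbf{Second bound (case $k=4$).} Now the summand $q\,\tau_4(q)^4 w_4(q)$ is again multiplicative, so
$$\sum_{q\le Q}q\tau_4(q)^{4}w_4(q)\le\prod_{p\le Q}\Bigl(1+\sum_{j\ge 1}p^{j}\tau_4(p^{j})^{4}w_4(p^{j})\Bigr).$$
By \eqref{tauk}, $\tau_4(p^{j})\ll p^{-j/4}$, so $\tau_4(p^{j})^4\ll p^{-j}$, which exactly cancels the factor $p^{j}$; thus $p^{j}\tau_4(p^{j})^{4}w_4(p^{j})\ll w_4(p^{j})$, and from \eqref{wuok} one has $w_4(p^{j})\le p^{-1/2}$ for $j=1$ and $w_4(p^{j})\le p^{-1}$ for $j\ge 2$. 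One must be a touch more careful because $\tau_4(p^{uk+1})=k p^{-u-1/2}$ carries a constant $k=4$, but $\tau_4(p^{j})^4$ still only contributes a bounded constant times $p^{-j}$. Hence the $j=1$ term is $O(p^{-1/2})$ and the tail $j\ge 2$ is $O(p^{-1})$; this is not yet obviously summable over $p$, so I would sharpen the $j=1$ analysis: when $j=1$ we are in the case $u=0$, $v=1$, so $\tau_4(p)=4p^{-1/2}$ and $w_4(p)=p^{-1/2}$, giving $p\cdot (4p^{-1/2})^4\cdot p^{-1/2}=256\,p^{-1/2}$, which indeed is only $O(p^{-1/2})$ — so the naive product diverges like $\exp(O(\sum_{p\le Q}p^{-1/2}))=\exp(O(Q^{1/2}/\log Q))$, far worse than $Q^{\varepsilon}$. \textbf{This is the main obstacle}, and it tells me the bound $\tau_4(q)\ll q^{-1/4}$ is too lossy at prime moduli: one must instead use the exact square-root cancellation $q^{-1}S_4(q,a)\ll\tau_4(q)$ together with the fact that $\tau_4(p)=4p^{-1/2}$ already encodes the Weil bound, and pair it against $w_4(p)=p^{-1/2}$ coming from a genuinely one-dimensional complete sum; the product $p\cdot p^{-2}\cdot p^{-1/2}=p^{-3/2}$ is what one should really get if one of the $\tau_4$ factors is combined with the $p$ and not all four are thrown away — in other words I would keep $p^{j}\tau_4(p^{j})^{4}w_4(p^{j})=\tau_4(p^{j})\cdot\bigl(p^{j/3}\tau_4(p^{j})\bigr)^{3}\cdot w_4(p^{j})\ll \tau_4(p^{j}) w_4(p^{j})\ll p^{-1/4}p^{-1/4}$ for $j=1$ wait that is still $p^{-1/2}$; the correct bookkeeping is $p^j \tau_4(p^j)^4 w_4(p^j)$ with $\tau_4(p^j)^4 \ll p^{-j} \cdot p^{-\delta j}$ is false. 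The honest resolution: for $j=1$, $p\cdot\tau_4(p)^4 w_4(p)=p\cdot c p^{-2}\cdot p^{-1/2}=cp^{-3/2}$, so in fact $\tau_4(p)^4=c p^{-2}$ exactly (four copies of $p^{-1/2}$), and $p\cdot p^{-2}=p^{-1}$, times $w_4(p)=p^{-1/2}$ gives $p^{-3/2}$ — summable. I miscounted above: $\tau_4(p)=4p^{-1/2}$ so $\tau_4(p)^4=256p^{-2}$, and $p\cdot p^{-2}\cdot p^{-1/2}=256p^{-3/2}$. Good. For $j\ge 2$ one similarly gets at worst $p^{j}\cdot p^{-j}\cdot p^{-1}=p^{-1}$ from $u=0,2\le v\le 3k$, and a geometric tail over $u\ge 1$; summing over $j\ge 2$ gives $O(p^{-1})$. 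Hence each local factor is $1+O(p^{-1})$ — wait, the $j\ge2$, $u=0$ terms give $O(p^{-1})$ not $O(p^{-2})$; there are at most $11$ of them so still $O(p^{-1})$, and the product $\prod_{p\le Q}(1+O(p^{-1}))\ll Q^{\varepsilon}$ by Mertens. Thus I would carry out the Euler-product estimate carefully tracking that every local factor is $1+O(p^{-1})$, conclude $\prod_{p\le Q}(1+O(p^{-1}))\le\exp\bigl(O(\sum_{p\le Q}1/p)\bigr)=\exp(O(\log\log Q))\ll Q^{\varepsilon}$, and this yields \eqref{sum}; the analogous computation with the stronger hypothesis $\alpha\le(m-1)/(3k)$ gives the first bound in the same way.
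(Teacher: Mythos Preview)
Your approach is essentially identical to the paper's: bound the multiplicative sum by the Euler product $\prod_{p\le Q}\bigl(1+\sum_{j\ge 1}(\cdots)\bigr)$, show the local sum is $O(p^{-1})$, and conclude via Mertens that the product is $\ll Q^{\varepsilon}$; the paper does exactly this in three lines, and your eventual computation for the $k=4$ case (namely $p\cdot\tau_4(p)^4 w_4(p)\asymp p^{-3/2}$ at $j=1$ and $p^{j}\tau_4(p^{j})^{4}w_4(p^{j})\ll w_4(p^{j})$ for $j\ge 2$) matches the paper verbatim. One small slip to clean up: your claim that $-m/3-1/6<-1$ fails when $m=2$ (it equals $-5/6$), but this is harmless since you correctly fall back to the $1+O(p^{-1})$ bound and Mertens anyway, which is all the paper uses.
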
 
\begin{proof} By the multiplicative property of $w_{k}(q)$ it follows that
$$\sum_{q\leq Q}q^{\alpha}w_{k}(q)^{m}\ll \prod_{p\leq Q}\big(1+\sum_{h=1}^{\infty}p^{h\alpha}w_{k}(p^{h})^{m}\big)\ll \prod_{p\leq Q}\big(1+Cp^{-1}\big)\ll Q^{\varepsilon}.$$ For the second estimate we use the bound $\tau_{4}(p^{h})^{4}\ll p^{-h}$ when $h\geq 2$ to obtain
$$\sum_{h=1}^{\infty}p^{h}\tau_{4}(p^{h})^{4}w_{4}(p^{h})\ll p^{-3/2}+\sum_{h\geq 2}w_{4}(p^{h})\ll p^{-1}.$$ Equation (\ref{sum}) follows then combining the above bound with multiplicativity.
\end{proof}
\begin{lem}\label{lema5}
Let $a\in\mathbb{Z}$ and $q\in\mathbb{N}$ with $(a,q)=1$. The functions $S_{\mathbf{y}}(q,a)$ and $S_{\mathbf{Y},\mathbf{p}}(q)$ defined above satisfy
\begin{equation}\label{Ss}S_{\mathbf{y}}(q,a)\ll q^{1+\varepsilon}w_{k}(q),\ \ \ \ \ \ \ \ \ \ \ \  \ S_{\mathbf{Y},\mathbf{p}}(q)\ll q^{1+\varepsilon}w_{k}(q)^{s+t}.\end{equation}
As a consequence, for every $Q\geq 1$ and every $\alpha\leq\frac{s+t-1}{3k}-1$ it follows that \begin{equation}\label{qal}\sum_{\substack{q\leq Q}}q^{\alpha}\lvert S_{\mathbf{Y},\mathbf{p}}(q)\rvert\ll Q^{\varepsilon}\ \ \ \text{and}\ \ \ \ \sum_{\substack{q>Q}}\lvert S_{\mathbf{Y},\mathbf{p}}(q)\rvert\ll Q^{\varepsilon-\alpha}.\end{equation}
\end{lem}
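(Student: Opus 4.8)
\textbf{Proof proposal for Lemma \ref{lema5}.}

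The plan is to establish the two pointwise bounds in (\ref{Ss}) first and then deduce the summatory estimates (\ref{qal}) from Lemma \ref{expo}. For the bound on $S_{\mathbf{y}}(q,a)$, I would argue exactly as for (\ref{wee}) in the proof of Lemma \ref{lema2}, but now with $b=0$: by the quasi-multiplicativity of $S_{\mathbf{y}}(q,a)$ it suffices to treat prime powers $q=p^{3ku+v}$. One then invokes Vaughan \cite[Theorem 7.1]{Vau} to get $S_{\mathbf{y}}(q,a)\ll q^{1-1/3k+\varepsilon}$ in general; the refinement from the proof of that theorem (the argument following \cite[(7.16)]{Vau}) for the case $u=0$, $v\geq 2$ to get $S_{\mathbf{y}}(p^{v},a)\ll p^{v-1}$; and Weil's bound (via Schmidt \cite[Corollary 2F]{Sch}) for $q=p$ to get $S_{\mathbf{y}}(p,a)\ll p^{1/2}$. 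Matching these three cases against the definition (\ref{wuok}) of $w_{k}$ gives $S_{\mathbf{y}}(q,a)\ll q^{1+\varepsilon}w_{k}(q)$ on prime powers, and then on all $q$ by multiplicativity (absorbing the finitely many prime factors' $q^{\varepsilon}$ contributions).

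For the bound on $S_{\mathbf{Y},\mathbf{p}}(q)$, I would first note that $S_{p_{i}\mathbf{y}_{i}}(q,a)$ is of the form $S_{\mathbf{z}}(q,a)$ with $\mathbf{z}=p_{i}\mathbf{y}_{i}$, so the first bound of (\ref{Ss}) applies uniformly to every factor in the product defining $S_{\mathbf{Y},\mathbf{p}}(q)$. Since there are $s+t$ such factors, and the outer sum over $a$ has at most $q$ terms each of modulus $1$ from $e(-an/q)$, we obtain
\begin{equation*}
\lvert S_{\mathbf{Y},\mathbf{p}}(q)\rvert\ll q^{-s-t}\cdot q\cdot\big(q^{1+\varepsilon}w_{k}(q)\big)^{s+t}=q^{1+\varepsilon}w_{k}(q)^{s+t},
\end{equation*}
as claimed. (Here I am using that $w_{k}$ is multiplicative and the estimate is of the stated uniform shape; strictly one should say the implied constant in each $q^{\varepsilon}$ is absorbed into a single $q^{(s+t)\varepsilon}$, which is harmless after relabelling $\varepsilon$.)

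Finally, for (\ref{qal}): given $\alpha\leq\frac{s+t-1}{3k}-1$, set $\alpha'=\alpha+1\leq\frac{(s+t)-1}{3k}$. Applying Lemma \ref{expo} with $m=s+t\geq 2$ and exponent $\alpha'$ (noting $\alpha'=0$ is allowed when $s+t=2$, and $\alpha'\leq\frac{m-1}{3k}$ when $s+t\geq 3$) gives $\sum_{q\leq Q}q^{\alpha'}w_{k}(q)^{s+t}\ll Q^{\varepsilon}$. Combining with the second bound of (\ref{Ss}) yields $\sum_{q\leq Q}q^{\alpha}\lvert S_{\mathbf{Y},\mathbf{p}}(q)\rvert\ll\sum_{q\leq Q}q^{\alpha+1+\varepsilon}w_{k}(q)^{s+t}\ll Q^{\varepsilon}$, which is the first assertion. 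For the tail sum, for any $q>Q$ one has $1\leq q^{-\alpha}\cdot q^{\alpha}$ trivially, so $\sum_{q>Q}\lvert S_{\mathbf{Y},\mathbf{p}}(q)\rvert\leq Q^{-\alpha}\sum_{q>Q}q^{\alpha}\lvert S_{\mathbf{Y},\mathbf{p}}(q)\rvert$ is not quite what we want; instead I would take any fixed $\alpha_{0}$ with $0<\alpha_{0}\leq\frac{s+t-1}{3k}-1-\alpha$ (possible because the allowed range for the exponent in Lemma \ref{expo} has room to spare once $s,t$ are the concrete values $2^{k}$ and $t(k)$ for $k=2,3$) and write $\lvert S_{\mathbf{Y},\mathbf{p}}(q)\rvert=q^{-\alpha-\alpha_{0}}\cdot q^{\alpha+\alpha_{0}}\lvert S_{\mathbf{Y},\mathbf{p}}(q)\rvert\leq q^{-\alpha-\alpha_{0}}\cdot(\text{bounded summand})$, so that $\sum_{q>Q}\lvert S_{\mathbf{Y},\mathbf{p}}(q)\rvert\ll Q^{-\alpha-\alpha_{0}}\cdot Q^{\varepsilon}\ll Q^{\varepsilon-\alpha}$. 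The main point to verify carefully is thus just the bookkeeping that the concrete values of $s$ and $t$ make the exponent hypothesis of Lemma \ref{expo} satisfiable with a little room; the exponential-sum inputs are all lifted verbatim from the proof of Lemma \ref{lema2} and Vaughan's \cite[Theorem 7.1]{Vau}.
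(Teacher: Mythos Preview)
Your treatment of the pointwise bounds in (\ref{Ss}) and of the first sum in (\ref{qal}) is correct and matches the paper exactly: the paper simply observes $S_{\mathbf{y}}(q,a)=S_{\mathbf{y}}(q,a,0)$ and invokes (\ref{wee}), then combines with Lemma~\ref{expo}.

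There is, however, a genuine gap in your tail-sum argument. After writing $\lvert S_{\mathbf{Y},\mathbf{p}}(q)\rvert=q^{-\alpha-\alpha_{0}}\cdot q^{\alpha+\alpha_{0}}\lvert S_{\mathbf{Y},\mathbf{p}}(q)\rvert$ you assert that the second factor is a ``bounded summand'' and conclude $\sum_{q>Q}\lvert S_{\mathbf{Y},\mathbf{p}}(q)\rvert\ll Q^{-\alpha-\alpha_{0}}\cdot Q^{\varepsilon}$. But the first part of (\ref{qal}) only controls $\sum_{q\leq N}q^{\alpha+\alpha_{0}}\lvert S_{\mathbf{Y},\mathbf{p}}(q)\rvert\ll N^{\varepsilon}$ for each finite $N$; it gives you neither a bound on the individual summands nor on the infinite tail $\sum_{q>Q}q^{\alpha+\alpha_{0}}\lvert S_{\mathbf{Y},\mathbf{p}}(q)\rvert$. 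To pass from control of partial sums to a tail bound you still need a partial-summation or dyadic step. Relatedly, your parenthetical about ``room to spare'' is misplaced: the lemma is stated for all $\alpha\leq\frac{s+t-1}{3k}-1$ including the endpoint, where no $\alpha_{0}>0$ exists (though this particular point is harmless once the $\varepsilon$ is absorbed).

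The paper closes this gap with a one-line dyadic argument: from the first inequality in (\ref{qal}) one has $\sum_{Q\leq q\leq 2Q}\lvert S_{\mathbf{Y},\mathbf{p}}(q)\rvert\ll Q^{-\alpha}\sum_{q\leq 2Q}q^{\alpha}\lvert S_{\mathbf{Y},\mathbf{p}}(q)\rvert\ll Q^{\varepsilon-\alpha}$, and then $\sum_{q>Q}=\sum_{j\geq 0}\sum_{2^{j}Q<q\leq 2^{j+1}Q}\ll\sum_{j\geq 0}(2^{j}Q)^{\varepsilon-\alpha}\ll Q^{\varepsilon-\alpha}$ since $\alpha>0$ in the relevant range. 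This avoids introducing $\alpha_{0}$ altogether and works uniformly up to the endpoint.
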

\begin{proof}
On recalling (\ref{Syt}) note that $S_{\mathbf{y}}(q,a)=S_{\mathbf{y}}(q,a,0)$. Therefore, (\ref{wee}) yields $S_{\mathbf{y}}(q,a)\ll q^{1+\varepsilon}w_{k}(q)$, and hence (\ref{Ss}) holds. This estimate and Lemma \ref{expo} imply the first inequality in (\ref{qal}). Finally, observe that as a consequence we have
$$\sum_{\substack{Q\leq q\leq 2Q}}\lvert S_{\mathbf{Y},\mathbf{p}}(q)\rvert\ll Q^{\varepsilon-\alpha},$$ from where the second inequality of (\ref{qal}) follows by summing over dyadic intervals.
\end{proof}
We apply the bounds obtained in the previous lemma to a collection of singular series and other related series. For such purpose, it is convenient to define, for tuples $(\mathbf{Y}, \mathbf{p})$ and each prime $p$ the sums
$$\frak{S}_{\mathbf{Y},\mathbf{p}}(n)=\sum_{q=1}^{\infty}S_{\mathbf{Y},\mathbf{p}}(q),\ \ \ \ \ \ \ \ \ \ \ \ \ \ \ \ \sigma(p)=\sum_{l=0}^{\infty}S_{\mathbf{Y},\mathbf{p}}(p^{l}).$$
\begin{lem}\label{lema6}
The singular series $\frak{S}_{\mathbf{Y},\mathbf{p}}(n)$ converges absolutely, the identity
\begin{equation}\label{Sin}\frak{S}_{\mathbf{Y},\mathbf{p}}(n)=\prod_{p}\sigma(p)\end{equation}
holds and $0\leq\frak{S}_{\mathbf{Y},\mathbf{p}}(n)\ll 1$. Furthermore, one has $\frak{S}_{\mathbf{Y},\mathbf{p}}(n)\gg 1$ provided that:
\begin{enumerate}
\item When $k=2$ one has $C_{p_{i}\mathbf{y}_{i}}\equiv 28 \pmod{108}$ for $1\leq i\leq t$ and $C_{\mathbf{y}_{i}}\equiv 28\pmod{108}$ for $t+1\leq i\leq s+t$;
\item When $k=3$ one has $C_{p_{i}\mathbf{y}_{i}}\equiv 0 \pmod{162}$ for $1\leq i\leq t$ and $C_{\mathbf{y}_{i}}\equiv 0\pmod{162}$ for $t+1\leq i\leq s+t$.
\end{enumerate}
\end{lem}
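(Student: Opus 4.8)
The standard route to such a lemma is to exploit multiplicativity of $S_{\mathbf Y,\mathbf p}(q)$ to factor the singular series as an Euler product, establish absolute convergence, and then bound each local factor $\sigma(p)$ away from zero for all but finitely many primes, reducing the positivity claim to a finite local solubility problem at the bad primes. First I would record the quasi-multiplicative property of the exponential sums $S_{\mathbf y}(q,a)$ and $S_{p\mathbf y}(q,a)$ in the variable $q$ (inherited from the complete sums in \eqref{ggg} via CRT), which passes to $S_{\mathbf Y,\mathbf p}(q)$ as a genuinely multiplicative function of $q$ since the linear phase $e(-an/q)$ splits correctly under $q=q_1q_2$ with $(q_1,q_2)=1$. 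Absolute convergence and the Euler product identity \eqref{Sin} then follow from \eqref{qal} of Lemma \ref{lema5}: with $s+t\geq 6$ for every admissible pair $(k,s,t)$, one has $\frac{s+t-1}{3k}-1\geq 1$, so $\sum_q q|S_{\mathbf Y,\mathbf p}(q)|\ll 1$, which is more than enough. Non-negativity of $\frak S_{\mathbf Y,\mathbf p}(n)$ comes from interpreting each $\sigma(p)$ as a limit of normalised counts of solutions modulo $p^l$ to the underlying congruence $\sum_i(r_i^3+C_{\mathbf y_i})^k\equiv n$ (with the $p$-dilated variables for $1\le i\le t$), hence each local factor is a nonnegative real; this is the usual Hensel/orthogonality computation, e.g. as in Vaughan \cite[\S 2.6]{Vau}.

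For the quantitative lower bound $\frak S_{\mathbf Y,\mathbf p}(n)\gg 1$, I would split the primes into a ``good'' range and a finite ``bad'' set. For $p$ large (say $p>p_0$ for an absolute $p_0$), the bound $S_{\mathbf Y,\mathbf p}(p^l)\ll p^{l}w_k(p^l)^{s+t}$ from \eqref{Ss} together with the decay $w_k(p^l)\ll p^{-l/3k}$ gives $\sum_{l\geq 1}|S_{\mathbf Y,\mathbf p}(p^l)|\ll p^{-1-\delta'}$ for some $\delta'>0$ once $s+t$ is large enough that $\frac{s+t}{3k}>2$, which holds in every case of interest. Hence $\sigma(p)=1+O(p^{-1-\delta'})$ for $p>p_0$, and $\prod_{p>p_0}\sigma(p)$ converges to a positive constant bounded below. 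It remains to show $\sigma(p)>0$, i.e.\ $\sigma(p)\gg_p 1$, for each of the finitely many primes $p\leq p_0$; by a standard Hensel-lifting argument (the count of solutions modulo $p^l$ stabilises once a nonsingular solution modulo $p^{\gamma}$ is found for a fixed exponent $\gamma=\gamma(k)$), this is equivalent to exhibiting, for each small $p$, a nonsingular solution of the congruence modulo $p^{\gamma}$. This is precisely where the congruence conditions in items (1) and (2) enter.

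The substantive step — and the one I expect to be the main obstacle — is verifying local solubility at the small primes, above all at $p=2$ and $p=3$. The reason the hypotheses prescribe $C_{\mathbf y_i}\equiv 28\pmod{108}$ when $k=2$ and $C_{\mathbf y_i}\equiv 0\pmod{162}$ when $k=3$ is that $108=4\cdot 27$ and $162=2\cdot 81$ are exactly the moduli at which the mixed structure ``cube plus constant, all raised to the $k$-th power'' becomes delicate: one needs the values $(r^3+C_{\mathbf y_i})^k$ to range over enough residues mod powers of $2$ and $3$ to represent every $n$, and simultaneously needs a nonsingular point so Hensel applies despite the vanishing of the derivative $3k r^2(r^3+C)^{k-1}$ at $r\equiv 0$. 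Concretely I would: (i) fix the residues $C_{\mathbf y_i}\bmod 108$ (resp.\ $162$) as prescribed, and for $p\in\{2,3\}$ show by direct enumeration that the relevant number $s+t$ of summands already covers all residues modulo $2^{\gamma}$ and $3^{\gamma}$ for the small fixed $\gamma$ dictated by the $p$-adic valuation of $3k$; (ii) check that among the representations there is one with at least one variable $r_i$ for which $p\nmid r_i$ and $p\nmid(r_i^3+C_{\mathbf y_i})$, giving a unit derivative and hence a nonsingular lift; (iii) for every other prime $p\leq p_0$ with $p\nmid 3k$, note the derivative condition is automatically satisfiable since $r\mapsto r^3$ is either a bijection (if $p\not\equiv 1\bmod 3$) or covers a large set, so a nonsingular solution exists for trivial reasons, and $p=p_0$ can be taken small enough that only $p\in\{2,3,5,7\}$ or so require hand computation. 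The congruence choices $28\bmod 108$ and $0\bmod 162$ should be shown to make step (ii) succeed; granting that, \eqref{Sin}, nonnegativity, the tail estimate $\prod_{p>p_0}\sigma(p)\gg 1$, and $\prod_{p\leq p_0}\sigma(p)\gg 1$ combine to give $\frak S_{\mathbf Y,\mathbf p}(n)\gg 1$, completing the proof.
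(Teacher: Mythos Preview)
Your plan matches the paper's: establish the Euler product and absolute convergence via multiplicativity and Lemma~\ref{lema5}, then interpret $\sigma(p)$ as a $p$-adic solution density and reduce the lower bound $\frak S_{\mathbf Y,\mathbf p}(n)\gg 1$ to exhibiting a nonsingular solution modulo $p^{\gamma}$ for each prime, lifting via Hensel. The paper makes the large-prime step explicit via Cauchy--Davenport (so that only $p\in\{2,3\}$ require direct inspection) and appeals to Vaughan \cite[Lemma~2.14]{Vau} at $p=3$.

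However, both of your numerical checks fail, and the second leaves a genuine gap. With $s+t=8$ for $k=2$ and $s+t=17$ for $k=3$, one has $\tfrac{s+t-1}{3k}-1=\tfrac16$ and $\tfrac79$ respectively, not $\ge 1$; this is harmless, since absolute convergence only needs some $\alpha>0$ in \eqref{qal}. The gap is in the tail estimate for the product: your condition $\tfrac{s+t}{3k}>2$ also fails (the ratio is $\tfrac43$, resp.\ $\tfrac{17}{9}$), and via the crude bound $w_k(p^l)\ll p^{-l/3k}$ you obtain only $\sigma(p)-1\ll p^{1-(s+t)/(3k)}$, which for $k=2$ is $p^{-1/3}$ --- not summable over primes, so $\prod_{p>p_0}\sigma(p)$ is not shown to converge to a positive number. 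The fix is to use the exact values in \eqref{wuok}: since $w_k(p)=p^{-1/2}$, the $l=1$ term contributes $\ll p^{1-(s+t)/2}\le p^{-3}$, and since $w_k(p^l)=p^{-1}$ for $2\le l\le 3k$ the higher terms are even smaller, whence $\sigma(p)-1\ll p^{-2}$ as the paper records in \eqref{sig}.
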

As mentioned before, the constraints on $C_{\mathbf{y}_{i}}$ and $C_{p_{i}\mathbf{y}_{i}}$ ensure the local solubility of the problem. Note that the set of tuples with these divisibility conditions has positive density over the set of tuples without the restrictions since it follows from the proof of Lemma 5.4 of \cite{Vau3} that smooth numbers are well distributed on arithmetic progressions. Therefore, we are still able to get the expected lower bound for the major arc contribution. Observe though that the choices for the constraints are not unique, but for the purpose of this exposition it will suffice to study just one of the possible restrictions.
\begin{proof}
Note that the application of Lemma \ref{lema5} yields the estimate \begin{equation}\label{sig}\sigma(p)-1\ll p^{-2}.\end{equation} This bound and the multiplicative property of $S_{\mathbf{Y},\mathbf{p}}(q)$ imply (\ref{Sin}), the convergence of the series $\frak{S}_{\mathbf{Y},\mathbf{p}}(n)$ and its upper bound.
To give a more arithmetic description of $\sigma(p)$ it is convenient to introduce 
$$\mathcal{M}_{n}(p^{h})=\Big\{\mathbf{X}\in [1,p^{h}]^{s+t}:\ \ n\equiv\sum_{i=1}^{t}(x_{i}^{3}+C_{p_{i}\mathbf{y}_{i}})^{k}+\sum_{i=t+1}^{s+t}(x_{i}^{3}+C_{\mathbf{y}_{i}})^{k} \pmod{p^{h}}\Big\}$$ and $M_{n}(p^{h})=\lvert \mathcal{M}_{n}(p^{h})\rvert.$ Observe that by a standard argument making use of orthogonality we obtain the relation
$$\sum_{l=0}^{h}S_{\mathbf{Y},\mathbf{p}}(p^{l})=p^{(1-s-t)h}M_{n}(p^{h}).$$ 
In view of (\ref{sig}) it transpires then that in order to prove the lower bound for $\mathfrak{S}_{\mathbf{Y},\mathbf{p}}(n)$ it will suffice to show that $p^{(1-s-t)h}M_{n}(p^{h})\geq C_{p}$ for some positive constant $C_{p}$ depending on $p$. For each $p$ prime, take $\tau\geq 0$ for which $p^{\tau}\| 3k$. Define $\gamma=\gamma(p)=2\tau+1$ and $$\mathcal{M}_{n}^{*}(p^{\gamma})=\Big\{\mathbf{X}\in\mathcal{M}_{n}(p^{\gamma}):\ \ p\nmid x_{1},\ \ p\nmid (x_{1}^{3}+C_{p_{1}\mathbf{y}_{1}})\Big\}.$$ We take $h\geq \gamma$ for convenience. Our priority for the rest of the lemma will be to show that $\lvert\mathcal{M}_{n}^{*}(p^{\gamma})\rvert>0$, since then an application of Hensel's Lemma will yield the bound $M_{n}(p^{h})\geq p^{(s+t-1)(h-\gamma)}.$ 

For further discussion, it is convenient to consider for a fixed number $C\in\mathbb{N}$ the sets $$\mathcal{T}_{C}(p^{\gamma})=\Big\{x^{3}+C\pmod{p^{\gamma}}\Big\},\ \  \mathcal{T}_{C}^{*}(p^{\gamma})=\Big\{x^{3}+C\pmod{p^{\gamma}}:\ p\nmid x,\ \ p\nmid (x^{3}+C)\Big\}.$$ Let $p\equiv 1\pmod{3}.$ Under this condition one has $p\geq 7$, so $\gamma=1$ with $\lvert\mathcal{T}_{C}(p)\rvert=(p+2)/3$ and $\lvert\mathcal{T}_{C}^{*}(p)\rvert\geq 1.$ If we denote the set of $k$-th powers of the above set by $$\mathcal{T}_{C}^{k}(p^{\gamma})=\Big\{y^{k} \pmod{p^{\gamma}}:\ y\in\mathcal{T}_{C}(p^{\gamma})\Big\},$$ then one finds that $\lvert\mathcal{T}_{C}^{k}(p)\rvert\geq \big\lceil(p+2)/3k\big\rceil.$ One can check that $\lvert\mathcal{T}_{C}^{k}(7)\rvert\geq 2$ for every $C\in\mathbb{N}$, and whenever $p>7$ we find that
$$(s+t-1)\Big(\Big\lceil\frac{p+2}{3k}\Big\rceil-1\Big)\geq p,$$
and hence Cauchy-Davenport delivers $\lvert\mathcal{M}_{n}^{*}(p)\lvert>0$. 
When $p\equiv 2\pmod{3}$ and $p>2$ then $\gamma=1$ and we further get $\lvert\mathcal{T}_{C}(p)\rvert=p$ and $\lvert\mathcal{T}_{C}^{*}(p)\rvert\geq 1$, whence another application of Cauchy-Davenport yields $\lvert\mathcal{M}_{n}^{*}(p)\rvert>0$. For the case $p=2$ the divisibility contraints reduce the problem to the resolution of
$$y_{1}^{6}+\dots+y_{8}^{6}\equiv n\pmod{8}$$ with $y_{i}\in\mathbb{N}$ and $2\nmid y_{1},$ which is straightforward. The case $k=3$ is also trivial since then one would have $\gamma(2)=1$. Likewise, if $p=3$ one finds that whenever $C\equiv 1\mmod{27}$ then $\mathcal{T}_{C}^{2}(27)=\{0,1,4,13,22\}$ and $\lvert\mathcal{T}_{C}^{*}(27)\rvert=3,$ so $\lvert\mathcal{M}_{n}^{*}(27)\rvert>0$ when $k=2$ follows combining the constraints for $C_{p_{i}\mathbf{y}_{i}}$ and $C_{\mathbf{y}_{i}}$ described above and Vaughan \cite[Lemma 2.14]{Vau}. Finally, when $k=3$ we make use of the conditions $ C_{\mathbf{y}_{i}}\equiv 0\mmod{81}$ and $C_{p_{i}\mathbf{y}_{i}}\equiv 0\mmod{81}$ to reduce the problem to finding a solution for
$$y_{1}^{9}+\ldots+y_{17}^{9}\equiv n\mmod{243}$$ with $y_{i}\in\mathbb{N}$ and $3\nmid y_{1}$. The solubility of this congruence is again a consequence of Vaughan \cite[Lemma 2.14]{Vau}. 
\end{proof}

\section{Singular integral}\label{sec5}
In this section we analyse the size of the singular integral following the classical approach making use of Fourier's Integral Theorem. For each pair of tuples $(\mathbf{Y},\mathbf{p})$ consider \begin{equation*}\label{Jnn}J_{\mathbf{Y},\mathbf{p}}(n)=\int_{-\infty}^{\infty}V_{\mathbf{Y},\mathbf{p}}(\beta)e(-n\beta)d\beta,\ \ \ \ \text{where}\ \ \ \\ V_{\mathbf{Y},\mathbf{p}}(\beta)=\prod_{i=1}^{t}v_{\mathbf{y}_{i},p_{i}}(\beta)\prod_{i=t+1}^{s+t}v_{\mathbf{y}_{i}}(\beta),\end{equation*} and $v_{\mathbf{y_{i}},p_{i}}(\beta)$ and $v_{\mathbf{y}_{i}}(\beta)$ were defined in (\ref{esp}).
\begin{lem}\label{lemi}
One has that $0\leq J_{\mathbf{Y},\mathbf{p}}(n)\ll P^{s}H^{t/3}n^{-1}$. Moreover, whenever $(\mathbf{Y},\mathbf{p})$ satisfies $M/2\leq p_{i}\leq 51 M/100$ for $1\leq i\leq t$ and $\mathbf{y}_{i}\leq P/2$ for $t+1\leq i\leq s+t$ then 
\begin{equation}\label{jn}J_{\mathbf{Y},\mathbf{p}}(n)\gg P^{s}H^{t/3}n^{-1}.\end{equation}
\end{lem}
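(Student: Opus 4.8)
The plan is to evaluate the singular integral $J_{\mathbf Y,\mathbf p}(n)$ via Fourier's Integral Theorem after passing to the representation of $v_{\mathbf y}(\beta)$ and $v_{\mathbf y,p}(\beta)$ as Fourier transforms of the explicit densities $B_{\mathbf y}$, $B_{\mathbf y,p}$ given in (\ref{vyp})--(\ref{fun}). First I would recall that $v_{\mathbf y}(\beta)=\int_{M_{\mathbf y}}^{N_{\mathbf y}}B_{\mathbf y}(\gamma)e(\beta\gamma)\,d\gamma$ and similarly for $v_{\mathbf y,p}$, so that $V_{\mathbf Y,\mathbf p}(\beta)$ is the Fourier transform of the $(s+t)$-fold additive convolution of the densities $B_{\mathbf y_i}$ (for $t+1\le i\le s+t$) and $B_{\mathbf y_i,p_i}$ (for $1\le i\le t$), each compactly supported on the stated interval. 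Since each density is continuous, nonnegative and integrable on its support, the convolution $\mathcal B_{\mathbf Y,\mathbf p}$ is continuous and nonnegative, and Fourier inversion gives $J_{\mathbf Y,\mathbf p}(n)=\mathcal B_{\mathbf Y,\mathbf p}(n)\ge 0$ at every point of continuity, provided $V_{\mathbf Y,\mathbf p}\in L^1(\mathbb R)$; the latter follows from the standard bound $v_{\mathbf y}(\beta)\ll P(1+n|\beta|)^{-1/k}$ and $v_{\mathbf y,p}(\beta)\ll H^{1/3}(1+n|\beta|)^{-1/k}$ obtained from stationary phase or integration by parts on (\ref{esp}), so that $|V_{\mathbf Y,\mathbf p}(\beta)|\ll P^sH^{t/3}(1+n|\beta|)^{-(s+t)/k}$, which is integrable once $s+t>k$ (true in all our cases).

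For the \emph{upper} bound one can argue more directly: split $\int_{-\infty}^\infty$ at $|\beta|=n^{-1}$. On $|\beta|\le n^{-1}$ use the trivial bounds $v_{\mathbf y}(\beta)\ll P$, $v_{\mathbf y,p}(\beta)\ll H^{1/3}$, giving a contribution $\ll P^sH^{t/3}n^{-1}$. On $|\beta|>n^{-1}$ use the decay estimate above: $\int_{n^{-1}}^\infty P^sH^{t/3}(n\beta)^{-(s+t)/k}\,d\beta\ll P^sH^{t/3}n^{-1}$ since $(s+t)/k>1$. This yields $J_{\mathbf Y,\mathbf p}(n)\ll P^sH^{t/3}n^{-1}$.

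For the \emph{lower} bound (\ref{jn}) the point is to show that the convolution density $\mathcal B_{\mathbf Y,\mathbf p}$ is $\gg P^sH^{t/3}n^{-1}$ at the specific point $n$, using that $n$ lies comfortably in the interior of the support of the convolution. Under the hypotheses $M/2\le p_i\le 51M/100$ and $\mathbf y_i\le P/2$, each summand $T(p_i\mathbf y_i)^k$ ranges over an interval whose length is $\asymp$ its size, and similarly for the $T(\mathbf y_i)^k$; a short computation with the endpoints $M_{\mathbf y},N_{\mathbf y},M_{\mathbf y,p},N_{\mathbf y,p}$ shows that with $n=P^{3k}$ the target $n$ lies in the open interior of $\sum_{i=t+1}^{s+t}[M_{\mathbf y_i},N_{\mathbf y_i}]+\sum_{i=1}^t[M_{\mathbf y_i,p_i},N_{\mathbf y_i,p_i}]$, bounded away from the endpoints by a positive proportion of $n$. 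On a subinterval of each factor's support of length $\asymp$ (size) the density $B_{\mathbf y_i}$ is $\gg P^{1-3k}\asymp P\cdot(\text{size})^{-1}$ (and $B_{\mathbf y_i,p_i}\gg H^{1/3}\cdot(\text{size})^{-1}$ after dividing by $p_i\asymp M$), because $\gamma^{1/k-1}(\gamma^{1/k}-C_{\mathbf y})^{-2/3}$ is bounded below there. Convolving $s+t$ such lower bounds over a box of total volume $\asymp n$ centred so as to hit $n$, one gets $\mathcal B_{\mathbf Y,\mathbf p}(n)\gg P^sH^{t/3}n^{-1}$, which is (\ref{jn}).

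\textbf{Main obstacle.} The routine-but-delicate part is the book-keeping in the lower bound: verifying that the prescribed ranges $M/2\le p_i\le 51M/100$ and $\mathbf y_i\le P/2$ place $n$ strictly inside the convolved support with room to spare on \emph{both} sides (the upper constraint $51/100$ rather than $2/3$, and $\mathbf y_i\le P/2$, are exactly what create this slack), and that on the relevant sub-box each density factor is bounded below by a constant multiple of $P$ (resp.\ $H^{1/3}$) divided by the length of its support. I expect the integrability/positivity of the Fourier inversion to be standard (it follows the classical treatment, e.g.\ as in Vaughan's book), so the real work is the elementary interval arithmetic confirming the non-degeneracy of the singular integral at $n$.
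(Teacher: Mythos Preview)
Your proposal is correct and follows essentially the same route as the paper: both identify $J_{\mathbf Y,\mathbf p}(n)$ with the value at $n$ of the $(s+t)$-fold convolution of the nonnegative densities $B_{\mathbf y_i}$, $B_{\mathbf y_i,p_i}$ via Fourier's Integral Theorem, deduce nonnegativity from this, and obtain the lower bound by checking that the stated constraints on $(\mathbf Y,\mathbf p)$ place $n$ well inside the support so that a box of volume $\asymp n^{s+t-1}$ contributes with each density factor $\asymp Pn^{-1}$ or $H^{1/3}n^{-1}$. The only cosmetic difference is in the upper bound: the paper reads it off directly from the density representation $J_{\mathbf Y,\mathbf p}(n)=\phi(n)$ (bounding the integrand of $\phi$ pointwise and the region by $n^{s+t-1}$), whereas you integrate $|V_{\mathbf Y,\mathbf p}(\beta)|$ using decay in $\beta$; note also that the paper records the sharper decay $v_{\mathbf y}(\beta)\ll P(1+n|\beta|)^{-1}$ (Lemma~\ref{lema4}) rather than the exponent $-1/k$ you quote, though your weaker bound is already sufficient.
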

In the following discussion we rewrite $J_{\mathbf{Y},\mathbf{p}}(n)$ as an integral whose size is easier to estimate. The conditions on the tuples described before ensure that we get a suitable range of integration for such integral. Note that the set of tuples on that range has positive density over the set of tuples without the restrictions, and hence we are still able to get the expected lower bound for the major arc contribution.
\begin{proof}
By using the expression of both $v_{\mathbf{y}}(\beta)$ and $v_{\mathbf{y},p}(\beta)$ in (\ref{vyp}) we find that
$$J_{\mathbf{Y},\mathbf{p}}(n)=\lim_{\lambda\rightarrow\infty}\int_{-\lambda}^{\lambda}\int_{\mathbf{x}\in\mathcal{S}}B_{\mathbf{Y},\mathbf{p}}(\mathbf{x})e\Big(\beta\Big(\sum_{i=1}^{s+t}x_{i}-n\Big)\Big)d\mathbf{x}d\beta,$$
where the function $B_{\mathbf{Y},\mathbf{p}}(\mathbf{x})$ is taken to be $$B_{\mathbf{Y},\mathbf{p}}(\mathbf{x})=\prod_{i=1}^{t}B_{\mathbf{y}_{i},p_{i}}(x_{i})\prod_{i=t+1}^{s+t}B_{\mathbf{y}_{i}}(x_{i})$$ and we integrate over the set $\mathcal{S}=\prod [M_{\mathbf{y}_{i},\mathbf{p}_{i}},N_{\mathbf{y}_{i},\mathbf{p}_{i}}]\times \prod[M_{\mathbf{y}_{i}},N_{\mathbf{y}_{i}}]$.
Then by integrating on $\beta$ and making the change of variables $v=\sum_{i=1}^{s+t}x_{i}$ we obtain
$$J_{\mathbf{Y},\mathbf{p}}(n)=\lim_{\lambda\rightarrow\infty}\int_{S_{1}}^{S_{2}}\phi(v)\frac{\sin\big(2\pi\lambda(v-n)\big)}{\pi(v-n)}dv,$$
where $\phi(v)$ is defined as $$\phi(v)=\int_{\mathbf{x}\in\mathcal{S}'(v)}B_{\mathbf{y}_{s+t}}\Big(v-\sum_{i=1}^{s+t-1}x_{i}\Big)\prod_{i=1}^{t}B_{\mathbf{y}_{i},p_{i}}(x_{i})\prod_{i=t+1}^{s+t-1}B_{\mathbf{y}_{i}}(x_{i})d\mathbf{x}$$ and $\mathcal{S}'(v)\subset \mathbb{R}^{s+t-1}$ denotes the subset of tuples satisfying $$x_{i}\in [M_{\mathbf{y}_{i},p_{i}}, N_{\mathbf{y}_{i},p_{i}}]\ \ \ \ \text{for $1\leq i\leq t$,} \ \ \ \ \ x_{i}\in [M_{\mathbf{y}_{i}},N_{\mathbf{y}_{i}}]\ \ \ \ \text{for $t+1\leq i\leq s+t-1$},$$ and 
\begin{equation}\label{MNMN}M_{\mathbf{y}_{s+t}}\leq v-\sum_{i=1}^{s+t-1}x_{i}\leq N_{\mathbf{y}_{s+t}}.\end{equation}
Since $\phi(v)$ is a function of bounded variation, it follows from Fourier's Integral Theorem that
$J_{\mathbf{Y},\mathbf{p}}(n)=\phi(n)$, which implies positivity. Note that combining the identity $P^{3}=M^{3}H$, which is a consequence of (\ref{MH}), the limits of integration defined after (\ref{vyp}) and equation (\ref{fun}), we find that whenever $\mathbf{x}\in\mathcal{S}'(n)$ then it follows that $B_{\mathbf{y}_{i},p_{i}}(x_{i})\asymp H^{1/3}n^{-1}$ for $1\leq i\leq t$ and $B_{\mathbf{y}_{i}}(x_{i})\asymp Pn^{-1}$ for $t+1\leq i\leq s+t-1$, and one further has
$$B_{\mathbf{y}_{s+t}}(n-\sum_{i=1}^{s+t-1}x_{i})\asymp Pn^{-1}.$$
Therefore, combining the previous ideas we obtain the upper bound for $J_{\mathbf{Y},\mathbf{p}}(n)$ stated at the beginning of the lemma. Moreover, if $(\mathbf{Y},\mathbf{p})$ lies in the range described right after that bound, then there exist intervals $I_{i}\subset [M_{\mathbf{y}_{i},p_{i}}, N_{\mathbf{y}_{i},p_{i}}]$ for $1\leq i\leq t$ and $I_{i}\subset [M_{\mathbf{y}_{i}},N_{\mathbf{y}_{i}}]$ for $t+1\leq i\leq s+t-1$ satisfying $|I_{i}|\asymp n$ and with the property that whenever $x_{i}\in I_{i}$ then (\ref{MNMN}) holds for $v=n$. Consequently, the preceding discussion yields (\ref{jn}).
\end{proof}
For the sake of brevity we define the auxiliary functions $h^{*}(\alpha)$ and $W^{*}(\alpha)$ by putting
\begin{equation*}\label{ec6.2}h^{*}(\alpha)=V(\alpha,q,a)\ \ \ \ \ \text{and}\ \ \ \ \ W^{*}(\alpha)=W(\alpha,q,a) \end{equation*} when $\alpha\in\grM(a,q)\subset\grM$ and $h^{*}(\alpha)=W^{*}(\alpha)=0$ for $\alpha\in\grm.$ Here the reader may want to recall (\ref{ggg}) and (\ref{Wa}). For the rest of the section we present some bounds for these functions.
\begin{lem}\label{lema4}
Let $\beta\in\mathbb{R}.$ For every prime $p$ and $\mathbf{y}\in \mathbb{N}^{2}$ one has
$$v_{\mathbf{y}}(\beta)\ll \frac{P}{1+n\lvert \beta\rvert}\ \ \ \ \ \text{and}\ \ \ \ \ v_{\mathbf{y},p}(\beta)\ll \frac{H^{1/3}}{1+n\lvert \beta\rvert}.$$Moreover, whenever $\alpha\in \grM(a,q)\subset \grM$ one finds that
\begin{equation*}h^{*}(\alpha)\ll \frac{q^{\varepsilon}w_{k}(q)P^{3}}{1+n\lvert\alpha-a/q\rvert}\ \ \ \ \text{and}\ \ \ \ \ W^{*}(\alpha)\ll \frac{q^{\varepsilon}w_{k}(q)MH}{(1+n\lvert\alpha-a/q\rvert)(\log P)}.\end{equation*}
\end{lem}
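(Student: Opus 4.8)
The plan is to establish the two oscillatory--integral bounds first, and then to read off the estimates for $h^{*}$ and $W^{*}$ by substituting these, together with the complete--sum bound from Lemma~\ref{lema5}, into the definitions (\ref{ggg}) and (\ref{Wa}).

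For $v_{\mathbf{y}}(\beta)$ I would combine the trivial bound $v_{\mathbf{y}}(\beta)\ll P$ with the first--derivative test for exponential integrals. The phase $\psi(x)=\beta(x^{3}+C_{\mathbf{y}})^{k}$ has derivative $\psi'(x)=3k\beta x^{2}(x^{3}+C_{\mathbf{y}})^{k-1}$, which is monotone on $[P/2,P]$, being $\beta$ times a product of monotone positive factors, and satisfies $\lvert\psi'(x)\rvert\geq 3k\lvert\beta\rvert(P/2)^{2}(P^{3}/8)^{k-1}\gg\lvert\beta\rvert P^{3k-1}$ there (using only $x^{3}+C_{\mathbf{y}}\geq x^{3}$ and $k\geq 2$). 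Hence $v_{\mathbf{y}}(\beta)\ll\lvert\beta\rvert^{-1}P^{1-3k}$, and since $n=P^{3k}$ this equals $P(n\lvert\beta\rvert)^{-1}$; combining the two bounds gives $v_{\mathbf{y}}(\beta)\ll P(1+n\lvert\beta\rvert)^{-1}$. The argument for $v_{\mathbf{y},p}(\beta)$ runs along the same lines: on $[H_{1},H_{2}]$ the phase $\beta p^{3k}(x^{3}+C_{\mathbf{y}})^{k}$ has monotone derivative of size $\gg\lvert\beta\rvert M^{3k}H^{2/3}H^{k-1}=\lvert\beta\rvert M^{3k}H^{k-1/3}$, and the relation $P^{3}=M^{3}H$ recorded after (\ref{MH}) rewrites $M^{3k}H^{k}$ as $P^{3k}=n$, so this lower bound is $\gg\lvert\beta\rvert nH^{-1/3}$; together with $v_{\mathbf{y},p}(\beta)\ll H^{1/3}$ one obtains $v_{\mathbf{y},p}(\beta)\ll H^{1/3}(1+n\lvert\beta\rvert)^{-1}$. (Alternatively one may integrate by parts once in the representation (\ref{vyp}): the functions $B_{\mathbf{y}},B_{\mathbf{y},p}$ of (\ref{fun}) are monotone decreasing in $\gamma$, so this costs $\lvert\beta\rvert^{-1}$ times their suprema, attained at the left endpoint and bounded by $\ll P^{1-3k}$ and $\ll M^{-1}P^{1-3k}$ respectively after using $k\geq 2$ and $Hp^{3}\asymp P^{3}$.)

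For the remaining two bounds I would insert the above into $V(\alpha,q,a)=q^{-1}\sum_{\mathbf{y}}S_{\mathbf{y}}(q,a)v_{\mathbf{y}}(\beta)$ and $W(\alpha,q,a)=q^{-1}\sum_{\mathbf{y},p}S_{p\mathbf{y}}(q,a)v_{\mathbf{y},p}(\beta)$, with $\beta=\alpha-a/q$. By (\ref{Ss}) one has $S_{\mathbf{y}}(q,a)\ll q^{1+\varepsilon}w_{k}(q)$ for every $\mathbf{y}\in\mathbb{N}^{2}$, and likewise $S_{p\mathbf{y}}(q,a)\ll q^{1+\varepsilon}w_{k}(q)$. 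Bounding the number of $\mathbf{y}\in\mathcal{A}(P,P^{\eta})^{2}$ by $P^{2}$ gives $h^{*}(\alpha)\ll q^{-1}\cdot P^{2}\cdot q^{1+\varepsilon}w_{k}(q)\cdot P(1+n\lvert\beta\rvert)^{-1}$, which is the claimed bound. For $W^{*}$, the number of $\mathbf{y}\in\mathcal{A}(H_{3},P^{\eta})^{2}$ is $\ll H^{2/3}$ and the number of primes in $[M/2,M]$ is $\ll M/\log M\ll M/\log P$, since $M=P^{\gamma(k)}$; multiplying with $v_{\mathbf{y},p}(\beta)\ll H^{1/3}(1+n\lvert\beta\rvert)^{-1}$ yields $W^{*}(\alpha)\ll q^{-1}\cdot H^{2/3}\cdot(M/\log P)\cdot q^{1+\varepsilon}w_{k}(q)\cdot H^{1/3}(1+n\lvert\beta\rvert)^{-1}$, which is the stated estimate.

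The only delicate points are the monotonicity of the phase derivatives (equivalently of $B_{\mathbf{y}}$ and $B_{\mathbf{y},p}$) needed to invoke the first--derivative estimate, and the exponent bookkeeping, where the identities $n=P^{3k}$ and $P^{3}=M^{3}H$ force the two competing quantities to match exactly so that no loss occurs; there is no substantial obstacle beyond checking these.
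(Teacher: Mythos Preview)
Your proof is correct and follows essentially the same route as the paper. The paper bounds $v_{\mathbf{y}}(\beta)$ by integrating by parts in the representation (\ref{vyp}), using that $B_{\mathbf{y}}$ is decreasing; this is exactly the ``alternative'' you mention, and your primary argument via the first--derivative test on the original integral (\ref{esp}) is equivalent to it under the change of variables. The deduction of the bounds for $h^{*}$ and $W^{*}$ from (\ref{Ss}) is identical to the paper's.
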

\begin{proof}
When $\lvert \beta\rvert\leq n^{-1}$, the bound for $v_{\mathbf{y}}(\beta)$ follows observing that by (\ref{vyp}) and the limits of integration taken after (\ref{vyp}) then
$$v_{\mathbf{y}}(\beta)\ll \int_{M_{\mathbf{y}}}^{ N_{\mathbf{y}}}y^{1/k-1}\big(y^{1/k}-C_{\mathbf{y}}\big)^{-2/3}dy\ll P.$$ For the case $\lvert \beta\rvert> n^{-1},$ using the fact that $B_{\mathbf{y}}(y)$ is decreasing and integrating by parts we have that
$$v_{\mathbf{y}}(\beta)\ll \lvert\beta\rvert^{-1}B_{\mathbf{y}}(M_{\mathbf{y}})\ll \lvert\beta\rvert^{-1}n^{1/3k-1},$$which proves the statement. The case $v_{\mathbf{y},p}(\beta)$ is done in a similar way and follows after applying the identity $P^{3}=M^{3}H,$ which is a consequence of (\ref{MH}). Combining these estimates and Lemma \ref{lema5} we get the bounds for $h^{*}(\alpha)$ and $W^{*}(\alpha)$.
\end{proof}
\section{Major arc contribution}\label{sec6}
In this section we show that the contribution of the set of narrow arcs $\grN$ is asymptotic to the expected main term. We prove then that the contribution of the remaining arcs is smaller by combining major and minor arc techniques and making use of Lemma \ref{lema1}. 
\begin{prop}\label{prop2}
There exists $\delta>0$ such that
\begin{equation*}\label{Wh}\int_{\grM}h(\alpha)^{s}W(\alpha)^{t}e(-\alpha n)\alpha=\sum_{\mathbf{Y},\mathbf{p}}\mathfrak{S}_{\mathbf{Y},\mathbf{p}}(n)J_{\mathbf{Y},\mathbf{p}}(n)+O(H^{t}M^{t}P^{3s-3k-\delta}).\end{equation*}
\end{prop}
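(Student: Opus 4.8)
The plan is to approximate the integral over $\grM$ by replacing $h(\alpha)$ and $W(\alpha)$ by their major arc models $h^{*}(\alpha)$ and $W^{*}(\alpha)$ on each arc $\grM(a,q)$, then to evaluate the resulting main term and identify it with $\sum_{\mathbf{Y},\mathbf{p}}\mathfrak{S}_{\mathbf{Y},\mathbf{p}}(n)J_{\mathbf{Y},\mathbf{p}}(n)$. First I would write, on $\grM$, the telescoping identity
\begin{equation*}h^{s}W^{t}-(h^{*})^{s}(W^{*})^{t}=\sum_{j}(\text{one factor }h-h^{*}\text{ or }W-W^{*})\times(\text{remaining }h,h^{*},W,W^{*}\text{ factors}),\end{equation*}
so that bounding the difference reduces to bounding $s+t$ integrals, each carrying one difference factor. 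On $\grM(a,q)\subset\grM$ one has $q\leq M^{k}$ and $|\alpha-a/q|\leq M^{k}/(qn)$, so Lemmata \ref{lema2} and \ref{lema3} (in their $|\beta|$ small form, or the general form with the $(1+n|\beta|)^{1/2}$ loss absorbed since $n|\beta|\leq M^{k}/q\leq M^{k}$) give $h(\alpha)-h^{*}(\alpha)\ll P^{2}q^{1+\varepsilon}w_{k}(q)(1+n|\beta|)^{1/2}$ and similarly for $W$ with $MH^{2/3}(\log P)^{-1}$ in place of $P^{2}$; meanwhile Lemma \ref{lema4} controls the surviving factors by $q^{\varepsilon}w_{k}(q)P^{3}(1+n|\beta|)^{-1}$ and $q^{\varepsilon}w_{k}(q)MH(1+n|\beta|)^{-1}(\log P)^{-1}$, and $|h(\alpha)|\le |h^{*}(\alpha)|+|h-h^{*}|$ etc. handles the genuine $h,W$ factors. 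Summing over $0\leq a\leq q$ and $q\leq M^{k}$, the $\beta$-integral over $|\beta|\leq M^{k}/(qn)$ of $(1+n|\beta|)^{1/2}(1+n|\beta|)^{-(s+t-1)}\,d\beta$ converges (since $s+t\geq 6$) to $O(n^{-1}\cdot(\text{const}))$, and the arithmetic sum $\sum_{q\leq M^{k}}q^{1+\varepsilon}w_{k}(q)^{s+t}$ is $\ll M^{k\varepsilon'}$ by Lemma \ref{expo}. The net bound for each difference integral is $\ll n^{-1}M^{k\varepsilon}\cdot(\text{one factor replaced})\cdot\prod(\text{other factors})$; comparing with the target error $H^{t}M^{t}P^{3s-3k-\delta}$ and using $P^{3}=M^{3}H$, $H=\max(M^{5-1/k},M^{2^{k-1}})$, the replacement of one $h$-factor by $P^{2}$ instead of $P^{3}$ costs a genuine power of $P$, and likewise each $W$-factor replacement costs a power, so the $M^{k\varepsilon}$ is swallowed and one obtains the claimed power saving $\delta>0$.

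Next I would evaluate $\int_{\grM}(h^{*})^{s}(W^{*})^{t}e(-\alpha n)\,d\alpha$. Unfolding the definitions $h^{*}=V(\alpha,q,a)=q^{-1}\sum_{\mathbf{y}}S_{\mathbf{y}}(q,a)v_{\mathbf{y}}(\beta)$ and $W^{*}=W(\alpha,q,a)=q^{-1}\sum_{\mathbf{y},p}S_{p\mathbf{y}}(q,a)v_{\mathbf{y},p}(\beta)$, expanding the product over the $s+t$ slots and interchanging sums, the integral over $\grM(a,q)$ factors as $S_{\mathbf{Y},\mathbf{p}}(q)\cdot e(\text{phase})$ against $\int_{|\beta|\leq M^{k}/(qn)}V_{\mathbf{Y},\mathbf{p}}(\beta)e(-n\beta)\,d\beta$ after summing over $a$ with $(a,q)=1$, the sum over $a$ producing exactly the factor $q^{-s-t}\sum_{(a,q)=1}e(-an/q)\prod S_{p_{i}\mathbf{y}_{i}}(q,a)\prod S_{\mathbf{y}_{i}}(q,a)=S_{\mathbf{Y},\mathbf{p}}(q)$. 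Then summing over $q\leq M^{k}$ and completing both the singular series (the tail $\sum_{q>M^{k}}|S_{\mathbf{Y},\mathbf{p}}(q)|\ll M^{-k\alpha+k\varepsilon}$ by the second bound in (\ref{qal})) and the singular integral (extending $\int_{|\beta|\leq M^{k}/(qn)}$ to $\int_{-\infty}^{\infty}$, with tail $\ll \int_{|\beta|>M^{k}/(qn)}P^{s}H^{t/3}(1+n|\beta|)^{-(s+t)}d\beta\ll n^{-1}P^{s}H^{t/3}(q/M^{k})^{s+t-1}$, summed over $q$ and over $\mathbf{Y},\mathbf{p}$) introduces only admissible errors of size $\ll H^{t}M^{t}P^{3s-3k-\delta}$, using $P^{s}H^{t/3}n^{-1}\asymp H^{t}M^{t}P^{3s-3k}\cdot(\text{number of tuples})^{-1}$ and Lemma \ref{lemi}. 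This yields $\sum_{\mathbf{Y},\mathbf{p}}\big(\sum_{q}S_{\mathbf{Y},\mathbf{p}}(q)\big)\big(\int_{\mathbb{R}}V_{\mathbf{Y},\mathbf{p}}(\beta)e(-n\beta)d\beta\big)=\sum_{\mathbf{Y},\mathbf{p}}\mathfrak{S}_{\mathbf{Y},\mathbf{p}}(n)J_{\mathbf{Y},\mathbf{p}}(n)$, as desired.

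The bookkeeping here is routine given the input lemmas; the main obstacle is making sure the power saving $\delta$ genuinely survives. The delicate point is that the ``trivial'' replacement error $h-h^{*}\ll P^{2}q^{1+\varepsilon}w_{k}(q)$ is only a factor $P$ smaller than the size $P^{3}$ of $h^{*}$ itself, and similarly $W-W^{*}$ is a factor $H^{1/3}$ smaller than $MH$; one must check that after multiplying by the remaining $s+t-1$ factors, integrating, and summing over arcs, the cumulative loss of a single such factor — combined with the $P^{\varepsilon}$, $M^{k\varepsilon}$ and $(\log P)$ inefficiencies — still leaves a positive power of $P$ below the target $H^{t}M^{t}P^{3s-3k}$. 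This is exactly where one uses that the number of arcs ($\ll M^{2k}$) times the length of each ($\ll M^{k}/(qn)$) times the convergence of the $(1+n|\beta|)$-integral produces a clean $n^{-1}$, so the only surplus is $M^{O(k)}P^{\varepsilon}$, which is dominated by the genuine power-of-$P$ gap coming from the $P^{2}$ versus $P^{3}$ (resp. $H^{1/3}$ versus $H$) discrepancy; I would state this comparison explicitly at the end, plugging in $P^{3}=M^{3}H$ and $\gamma(k)$, $H$ from (\ref{MH}) to verify the inequality of exponents. A secondary (but easy) check is that one may freely enlarge the constraint $|\beta|\le (2\cdot 3^k kq)^{-1}Pn^{-1}$ needed for the sharp form of Lemmata \ref{lema2}--\ref{lema3} to the full arc $|\beta|\le M^k/(qn)$ by absorbing the resulting $(1+n|\beta|)^{1/2}\ll M^{k/2}q^{-1/2}$ into the $q^\varepsilon$-type sums, which remains summable by Lemma \ref{expo} since $w_k(q)^{s+t}$ decays fast enough.
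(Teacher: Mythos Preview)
Your approach has two genuine gaps that prevent it from going through.

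First, the arithmetic sum you record is missing a factor of $q$: on each $\grM(a,q)$ your integrand in a typical error term is $\ll q^{1+\varepsilon}w_k(q)^{s+t}(1+n|\beta|)^{-(s+t)+3/2}$ times the size factors, the $\beta$-integral contributes $n^{-1}$, and then the sum over the $\phi(q)$ values of $a$ contributes a further factor $q$. The relevant $q$-sum is therefore $\sum_{q\le M^k}q^{2+\varepsilon}w_k(q)^{s+t}$, which Lemma \ref{expo} does not cover. For the term carrying $W-W^*$ the available saving is only $H^{1/3}$; when $k=2$ one has $H^{1/3}=P^{3/5}$, while taking $q=m^6\le M^2$ (for which $w_2(q)=m^{-1}$) already gives
\[
\sum_{q\le M^2}q^2 w_2(q)^8\ge\sum_{m\le M^{1/3}}m^4\gg P^{2/3},
\]
so $H^{-1/3}\sum_{q\le M^2}q^{2+\varepsilon}w_2(q)^8\gg P^{1/15}$ and the error term exceeds the target. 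No amount of bookkeeping repairs this: the $6$th-power moduli genuinely make the sum this large.

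Second, and more structurally, Lemma \ref{lema3} is stated under the hypothesis $(p,q)=1$ for every prime $M/2\le p\le M$, so it is simply unavailable on arcs with $q>M/2$; you cannot form or bound $W^*(\alpha)$ there in the way you propose, and hence the telescoping on all of $\grM$ cannot be executed.

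The paper avoids both obstacles by a different architecture. It performs the replacements $h\to h^*$ and $W\to W^*$ only on the narrow arcs $\grN$ (where $q\le(6k)^{-1}H^{1/3}$, restricting further to $q\le M/4$ for the $W$-step so that the coprimality hypothesis of Lemma \ref{lema3} holds). When $|W(\alpha)|^t$ appears in error terms it invokes the congruence average $\sum_{a=1}^q|W(\beta+a/q)|^4\ll q^\varepsilon(MH)^4(\log P)^{-4}$ of (\ref{Wsa}) to cancel the factor of $q$ from the sum over $a$, yielding $q$-sums of the shape $\sum_q q^{1+\varepsilon}w_k(q)^{s}$ that Lemma \ref{expo} does handle. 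The arcs $\grM\setminus\grN$ are then not approximated at all: they are pruned via the pointwise Weyl-type bound of Lemma \ref{lema1} for $W(\alpha)$, splitting according to whether the diagonal or off-diagonal term in (\ref{boundW}) dominates and combining this with (\ref{Wint})--(\ref{squa}). This pruning step is where most of the effort in the paper's proof lies, and it is entirely absent from your proposal.
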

\begin{proof}
We note first that the triangle inequality yields
$$h(\alpha)^{s}-h^{*}(\alpha)^{s}\ll \lvert h(\alpha)-h^{*}(\alpha)\rvert\big(\lvert h^{*}(\alpha)\rvert^{s-1}+\lvert h(\alpha)-h^{*}(\alpha)\rvert^{s-1}\big).$$
Observe that by (\ref{MH}) and the definition (\ref{kioto}) then whenever $\alpha\in\grN(a,q)$ one has that $(1+n\lvert\beta\rvert)^{-1}\geq qH^{-1/3}\geq qP^{-1}$ and $\lvert\beta\rvert\leq (6kq)^{-1}H^{1/3}n^{-1}\leq (2\cdot 3^{k}kq)^{-1}Pn^{-1}$ for $n$ sufficiently large. Consequently, Lemma \ref{lema2} applied to $\lvert h(\alpha)-h^{*}(\alpha)\rvert$ and Lemma \ref{lema4} applied to $\lvert h^{*}(\alpha)\rvert$ in the above equation deliver
\begin{equation}\label{ech}
h(\alpha)^{s}-h^{*}(\alpha)^{s}\ll q^{1+\varepsilon}w_{k}(q)^{s}P^{3s-1}(1+n\lvert\beta\rvert)^{-s+1},
\end{equation}and by the same reason then whenever $\alpha\in\grN(a,q)$ with $(p,q)=1$ for all primes $M/2\leq p\leq M$, Lemma \ref{lema3} gives
\begin{equation}\label{Wal}
W(\alpha)^{t}-W^{*}(\alpha)^{t}\ll M^{t}H^{t-1/3}q^{1+\varepsilon}w_{k}(q)^{t}(1+n\lvert\beta\rvert)^{-t+1}.
\end{equation}
We also need a bound on the following quantity to exploit some orthogonality relation when averaging over $q$. Denote by $N(q,P)$ the number of solutions of the congruence 
$$T(p_{1}\mathbf{x}_{1})^{k}+T(p_{2}\mathbf{x}_{2})^{k}\equiv T(p_{3}\mathbf{x}_{3})^{k}+T(p_{4}\mathbf{x}_{4})^{k}\pmod {q},$$ where $\mathbf{x}_{i}\in [1,H^{1/3}]^{3}$ and $M/2\leq p_{i}\leq M$ with $q\in\mathbb{N}$. By expressing $q$ as the product of prime powers, using the structure of the ring of integers of these prime powers and noting that the number of primes dividing $q$ is $O\big((\log q)/\log\log q\big)$ we obtain
\begin{equation}\label{NqP}N(q,P)\ll q^{\varepsilon}(MH)^{4}(\log P)^{-4}\big(q^{-1}+P^{-1}\big),\end{equation}where we also used the identity $P=MH^{1/3}$, and hence by orthogonality it follows that
\begin{equation}\label{Wsa}\sum_{a=1}^{q}\lvert W(\beta+a/q)\rvert^{4}\leq qN(q,P)\ll q^{1+\varepsilon}(MH)^{4}(\log P)^{-4}\big(q^{-1}+P^{-1}\big).\end{equation} Combining (\ref{ech}) and (\ref{Wsa}) one has that
\begin{align*}\int_{\grN}\Big\lvert h(\alpha)^{s}-h^{*}(\alpha)^{s}\Big\rvert \lvert W(\alpha)\rvert^{t}d \alpha&
\ll (HM)^{t}P^{3s-3k-1}\sum_{q\leq H^{1/3}}q^{1+\varepsilon}w_{k}(q)^{s}
\\
&\ll (HM)^{t}P^{3s-3k-\delta},
\end{align*}
where we used (\ref{MH}) and Lemma \ref{expo}. Before introducing the auxiliary function $W^{*}(\alpha)$ to replace $W(\alpha)$ we must ensure that the contribution of the arcs with $M/4<q\leq (6k)^{-1}H^{1/3}$ is small enough. By doing so we avoid having to approximate $W(\alpha)$ for the cases when $p\mid q$ for primes $p$ appearing in the definition (\ref{Weig}) of $W(\alpha)$. Combining Lemma \ref{lema4} with (\ref{Wsa}) one finds that \begin{align*}\sum_{M/4<q\leq (6k)^{-1}H^{1/3}}&\sum_{\substack{a=1\\ (a,q)=1}}^{q}\int_{0}^{1}\lvert h^{*}(\beta+a/q)\rvert^{s}\big\lvert W(\beta+a/q)\big\rvert^{t}d \beta\nonumber
\\
\ll &(HM)^{t}P^{3s-3k+\varepsilon}\sum_{M/4<q\leq (6k)^{-1}H^{1/3}}w_{k}(q)^{s}\ll (HM)^{t}P^{3s-3k-\delta},
\end{align*}
where in the last step we applied the definition (\ref{wuok}). For the range $q\leq M/4$ we always have $(p,q)=1$ for all primes $M/2\leq p\leq M$, so we can use (\ref{Wal}) and Lemma \ref{lema4} to obtain
\begin{align*}\sum_{q\leq M/4}\sum_{\substack{a=1\\ (a,q)=1}}^{q}&
\int_{\grN(a,q)}\lvert h^{*}(\alpha)\rvert^{s}\Big\lvert W(\alpha)^{t}-W^{*}(\alpha)^{t}\Big\rvert d \alpha
\\
&\ll P^{3s-3k}M^{t}H^{t-1/3}\sum_{q\leq M/4}q^{2+\varepsilon}w_{k}(q)^{s+t}\ll (HM)^{t}P^{3s-3k-\delta},
\end{align*} where in the last line we used (\ref{MH}) and applied Lemma \ref{expo}.
By Lemmata \ref{expo} and \ref{lema4} one has that
\begin{align*}
\sum_{q\leq M/4}\sum_{\substack{a=1\\ (a,q)=1}}^{q}&
\int_{\lvert\alpha-a/q\rvert>(6kq)^{-1}H^{1/3}n^{-1}}\lvert h^{*}(\alpha)\rvert^{s}\lvert W^{*}(\alpha)\rvert^{t}d\alpha
\\
&\ll H^{2t/3-s/3+1/3}M^{t}P^{3s-3k}\sum_{q\leq M/4}q^{s+t+\varepsilon}w_{k}(q)^{s+t}\ll (HM)^{t}P^{3s-3k-\delta}.
\end{align*}
Therefore, using the previous bounds, making a change of variables and combining Lemmata \ref{lema5} and \ref{lema4} it follows that
\begin{align}\label{hWW}\int_{\grN} h(\alpha)^{s} W(\alpha)^{t}e(-\alpha n)d\alpha=\sum_{\mathbf{Y},\mathbf{p}}\frak{S}_{\mathbf{Y},\mathbf{p}}(n)J_{\mathbf{Y},\mathbf{p}}(n)+O\big((HM)^{t}P^{3s-3k-\delta}\big).
\end{align} 

The rest of the section is devoted to ensure that the contribution of the remaining major arcs is smaller than the main term in the previous equation. Let $R(q,P)$ be the number of solutions of the congruence 
$$T(\mathbf{x}_{1})^{k}+T(\mathbf{x}_{2})^{k}\equiv T(\mathbf{x}_{3})^{k}+T(\mathbf{x}_{4})^{k}\pmod {q},$$ where $\mathbf{x}_{i}\in [1,P]^{3}$. Applying the same argument we used in (\ref{NqP}) for bounding $N(q,P)$ we find that
$R(q,P)\ll q^{\varepsilon}P^{12}\max(q^{-1},P^{-1}),$ and hence by orthogonality it follows that \begin{equation}\label{Wint}\sum_{a=1}^{q}\lvert h(\beta+a/q)\rvert^{4}\leq qR(q,P)\ll q^{1+\varepsilon}P^{12}\max(q^{-1},P^{-1}).\end{equation} 
Moreover, observe that by a similar argument for the case $k=2$ we get
\begin{equation}\label{squa}\sum_{a=1}^{q}\lvert h(\beta+a/q)\rvert^{2}\ll q^{1+\varepsilon}P^{6}\max(q^{-1},P^{-1}).\end{equation}

We consider for convenience the mean value $$I_{M}=\int_{\grM\setminus\grN}\lvert h(\alpha)\rvert^{s}\big\lvert W(\alpha)\big\rvert^{t}d \alpha.$$Our strategy for the treatment of this integral will be to bound $W(\alpha)$ pointwise via Lemma \ref{lema1} and use some major arc estimates. For such purposes, we define first $\Upsilon(\alpha)$ for $\alpha\in [0,1)$ by taking
$$\Upsilon(\alpha)=\tau_{k}(q)(1+n\lvert\alpha-a/q\lvert)^{-1}$$ when $\alpha\in\grM(a,q)\subset\grM$ and $\Upsilon(\alpha)=0$ otherwise. When $a\in\mathbb{Z}$ and $q\in\mathbb{N}$ satisfy $0\leq a\leq q\leq M^{k}$ and $(a,q)=1$, consider the set of arcs
\begin{equation}\label{kiotos}\grM'(a,q)=\Big\{ \alpha\in [0,1): \Big\lvert \alpha-a/q\Big\rvert \leq \frac{M}{q^{1/k}n}\Big\}\end{equation}and take $\grM'$ to be the union of such arcs. Note that then one has $\grM'\subset \grM.$ Observe that for $\alpha\in \grM\setminus\grM',$ the bound in the right handside of (\ref{boundW}) corresponding to the diagonal contribution dominates over the one corresponding to the non-diagonal contribution. Therefore, we can apply the same argument that we applied in Proposition \ref{prop1} to estimate the integral over this set. When $\alpha\in\grM'$ then it is the bound corresponding to the non-diagonal term the one which dominates. Let $I'_{M}$ be the contribution of $\grM'\setminus\grN$ to the integral $I_{M}$. By making use of Lemma \ref{lema1} and (\ref{Up}) we obtain that
$$I_{M}'\ll H^{t+t/24-\delta}M^{t}\int_{\grM'\setminus\grN}\lvert h(\alpha)\rvert^{s}\Upsilon(\alpha)^{t/2}d\alpha\ll H^{t+t/24-\delta}M^{t}(I_{1}+I_{2}),$$where 
$$I_{i}=\int_{\grM'\setminus\grN}\lvert h(\alpha)\rvert^{s-2}G_{i}(\alpha)\Upsilon(\alpha)^{t/2}d\alpha,\ \ \ \ \ \ \ \ \ \ \  \ \ \ \ \ \ i=1,2$$ with $G_{1}(\alpha)=\lvert h^{*}(\alpha)\rvert^{2}$ and $G_{2}(\alpha)=\lvert h(\alpha)-h^{*}(\alpha)\rvert^{2}.$ In view of the definitions (\ref{kioto}) and (\ref{kiotos}) for $\grN$ and $\grM'$ respectively, we make a distinction between the ranges $q\leq (6k)^{-1}H^{1/3}$ and $(6k)^{-1}H^{1/3}<q\leq M^{k}$. We also combine Lemmata \ref{expo} and \ref{lema4} with equations (\ref{Wint}) and (\ref{squa}) and the bound (\ref{tauk}) to obtain
\begin{align*}I_{1}\ll &
 P^{3s-3k}H^{-t/6-1/3}\sum_{q\leq (6k)^{-1}H^{1/3}}w_{k}(q)^{2}q^{t/2+1-t/2k+\varepsilon}
\\
&+P^{3s-3k}\sum_{(6k)^{-1}H^{1/3}<q\leq M^{k}}w_{k}(q)^{2}q^{1-t/2k+\varepsilon}\big(q^{-1}+P^{-1}\big)\ll P^{3s-3k+\varepsilon}H^{-t/6k}.\end{align*} Likewise, combining equations (\ref{Wint}) and (\ref{squa}) with Lemmata \ref{lema2} and \ref{expo} one finds that
\begin{align*}I_{2}\ll& P^{3s-3k-2+\varepsilon}H^{-t/6+2/3}\sum_{q\leq (6k)^{-1}H^{1/3}}q^{t/2-t/2k}w_{k}(q)^{2}
\\
&+P^{3s-3k-2+\varepsilon}\sum_{(6k)^{-1}H^{1/3}<q\leq M^{k}}w_{k}(q)^{2}q^{3-t/2k}\big(q^{-1}+P^{-1}\big)\ll P^{3s-3k+\varepsilon}H^{-t/6k},\end{align*}where we made use of (\ref{MH}). Therefore we obtain that $I'_{M}=O\big((HM)^{t}P^{3s-3k-\delta}\big),$ whence the result of the proposition follows combining (\ref{hWW}) with the previous estimates. 
\end{proof}
\emph{Proof of Theorem \ref{thm1.1} when $k=2,3$}. Note first that Lemma \ref{lemi} ensures positivity for $J_{\mathbf{Y},\mathbf{p}}(n)$ and guarantees that for $(\mathbf{Y},\mathbf{p})$ in the range described in the lemma then $J_{\mathbf{Y},\mathbf{p}}(n)\gg P^{s}H^{t/3}n^{-1}$. Similarly, Lemma \ref{lema6}  ensures the positivity of $\mathfrak{S}_{\mathbf{Y},\mathbf{p}}(n)$ and implies that for $(\mathbf{Y},\mathbf{p})$ satisfying the local conditions described after (\ref{Sin}) then $\mathfrak{S}_{\mathbf{Y},\mathbf{p}}(n)\gg 1$. As observed at the beginning of the lemmas, the intersection of the sets of pairs $(\mathbf{Y},\mathbf{p})$ satisfying those conditions has positive density. Therefore, we find that $$\sum_{\mathbf{Y},\mathbf{p}}\mathfrak{S}_{\mathbf{Y},\mathbf{p}}(n)J_{\mathbf{Y},\mathbf{p}}(n)\gg (HM)^{t}P^{3s-3k}(\log P)^{-t}.$$ Propositions \ref{prop1} and \ref{prop2} then yield the bound $R(n)\gg (HM)^{t}P^{3s-3k}(\log P)^{-t},$ which proves the theorem for $k=2,3$.

\section{The case $k=4$.}\label{sec7}
In this section we discuss the proof of the theorem for fourth powers. For such purpose, it is convenient to introduce the exponential sum
$$f(\alpha)=\sum_{x\in \mathcal{A}(P,P^{\eta})}e(x^{12}).$$
Let $R_{4}(n)$ be the number of solutions of the equation
$$n=\sum_{i=1}^{11}T(p_{i}\mathbf{x}_{i})^{4}+81\big(y_{1}^{12}+\dots+y_{46}^{12}\big),$$ where $\mathbf{x}_{i}\in \mathcal{W}$ with $M/2\leq p_{i}\leq M$ for $1\leq i\leq 11$ and $y_{i}\in\mathcal{A}(P,P^{\eta})$ for $1\leq i\leq 46$. Observe that the sums of three cubes on the right handside have been replaced by the specialization $3y^{3}$. Note as well that orthogonality yields the identity $$R_{4}(n)=\int_{0}^{1}W(\alpha)^{11}f(81\alpha)^{46}e(-\alpha n)d\alpha.$$
Our goal throughout the section is to obtain a lower bound for $R_{4}(n)$ for all sufficiently large $n$. Recalling (\ref{MH}) and (\ref{Wu}) and using the table of permissible exponents for $k=12$ in Vaughan and Wooley \cite{VauWoo2} we find that
\begin{align}\label{wf}\int_{\grm}\lvert W(\alpha)\rvert^{11}\lvert f(81\alpha)\rvert^{46}d\alpha&
 \ll H^{11+11/24-\delta}M^{11/2}\int_{0}^{1}\lvert f(\alpha)\rvert^{46}d\alpha\nonumber
\\
&\ll (HM)^{11}P^{34+\Delta_{23}-1/2-\delta},\end{align}where $\Delta_{23}=0.4988383,$ and hence it follows that the minor arc contribution is then $O\big((HM)^{11}P^{34-\delta}\big).$ 

We define a set of narrow major arcs $\grP$ by taking the union of
\begin{equation*}\grP(a,q)=\Big\{ \alpha\in [0,1): \Big\lvert \alpha-a/q\Big\rvert \leq \frac{R}{n}\Big\}\end{equation*} with $0\leq a\leq q\leq R$ and $(a,q)=1$, where $R=(\log P)^{1/5}$, and consider $\mathfrak{p}=[0,1)\setminus \grP$. In the next few lines we will combine all sort of major and minor arc techniques to prune back to the set of narrow arcs $\grP.$ As observed after (\ref{kiotos}), whenever $\alpha\in \grM\setminus\grM'$ then the bound in the right handside of (\ref{boundW}) corresponding to the diagonal contribution dominates over the one corresponding to the non-diagonal contribution. Therefore, we can apply the same argument that we applied in (\ref{wf}) to obtain that the integral over that set is $O\big((HM)^{11}P^{34-\delta}\big).$ 

We next note for further purposes that Theorem 1.8 of Vaughan \cite{Vau3} yields
\begin{equation}\label{f81}\sup_{\grn}\lvert f(81\alpha)\rvert\ll P^{1-\rho+\varepsilon},\end{equation} where $\rho=0.004259.$ As experts will realise, one could obtain a slightly bigger $\rho$ by applying the methods in \cite{Woo}. For the sake of brevity though, we avoid that treatment and make use of the weaker version of the estimate. We also remark that such improvement in the exponent would make no impact in the argument. Observe that using the same procedure as in (\ref{Wsa}) and (\ref{Wint}) we deduce that
\begin{equation}\label{fev}\sum_{a=1}^{q}\lvert f\big(81(\beta+a/q)\big)\rvert^{12}\ll q^{1+\varepsilon}P^{12}\max(q^{-1}+P^{-1}).\end{equation}
Note as well that whenever $\alpha\in \grM'\setminus \grN$ then $(1+n\lvert\beta\rvert)^{3/2}\geq H^{1/3}q^{-1}$, and hence Lemmata \ref{lema3} and \ref{lema4} yield $W(\alpha)\ll MH^{2/3}q^{1+\varepsilon}w_{4}(q)(1+n\lvert\beta\rvert)^{1/2}.$ By the preceding discussion together with Lemma \ref{lema1} and equations (\ref{f81}) and (\ref{fev}) we obtain
\begin{align*}\int_{\grM'/\grN}\lvert W(\alpha)\rvert^{11}\lvert f(81\alpha)\rvert^{46}d\alpha\ll (HM)^{11}P^{34(1-\rho)}\sum_{q\leq M^{4}}q^{2}\tau_{4}(q)^{4}w_{4}(q)(q^{-1}+P^{-1}).
\end{align*}

Here the reader may find useful to observe that we applied the estimate (\ref{boundW}) to eight copies of $W(\alpha)$ and the bound for $W(\alpha)$ deduced above to just one of them. Likewise, we made use of the pointwise estimate (\ref{f81}) to bound $34$ copies of $f(81\alpha)$ and we used the other $12$ to exploit the congruence condition via (\ref{fev}). We get that the above sum when $q\leq P$ is $O\big((HM)^{11}P^{34-\delta}\big)$ via Lemma \ref{expo}. Similarly, we use Lemma \ref{expo} and the bound $qP^{-1}\leq P^{1/11}$, which follows after (\ref{MH}), for the range $P\leq q\leq M^{4}$ to obtain that such contribution is also $O\big((HM)^{11}P^{34-\delta}\big)$. By the observation made before (\ref{ech}), which is still valid for $k=4$, and Lemma \ref{lema3} we find that whenever $\alpha\in\grN$ then
$$W(\alpha)\ll \frac{q^{\varepsilon}w_{4}(q)HM}{(1+n\lvert\beta\rvert)(\log P)}.$$ Therefore, the application of this bound and (\ref{Wsa}) yield
\begin{align*}\int_{\grN/\grP}\lvert W(\alpha)\rvert^{11}\lvert f(81\alpha)\rvert^{46}d\alpha\ll&
 (HM)^{11}P^{34}(\log P)^{-11}R^{-6}\sum_{q\leq R}q^{\varepsilon}w_{4}(q)^{7}
\\
&+(HM)^{11}P^{34}(\log P)^{-11}\sum_{q>R}q^{\varepsilon}w_{4}(q)^{7}.
\end{align*}Consequently, Lemma \ref{expo} and (\ref{wuok}) imply that such integral is $O\big((HM)^{11}P^{34}(\log P)^{-11-\delta}\big)$.

In what follows, we will briefly describe the singular series associated to the problem. There might be other approaches that would lead to more precise asymptotic formulae, but for the sake of simplicity we avoid including the sums of three cubes in the singular series. On recalling (\ref{Saq}), it is convenient to consider, for an integer $m\in\mathbb{N}$ and a prime $p$ the sums
$$S_{m}(q)=q^{-46}\sum_{\substack{a=1\\ (a,q)=1}}^{q}S_{12}(q,81a)^{46}e_{q}\big(-a(n-m)\big), \ \ \ \ \ \ \ \ \ \ \ \sigma_{m}(p)=\sum_{h=0}^{\infty}S_{m}(p^{h}).$$ Observe that whenever $3\nmid q$ then we can make a change of variables to rewrite $S_{m}(q)$ as 
$$S_{m}(q)=q^{-46}\sum_{\substack{a=1\\ (a,q)=1}}^{q}S_{12}(q,a)^{46}e_{q}\big(-a\overline{81}^{-1}(n-m)\big),$$where $\overline{81}^{-1}$ denotes the inverse of $81\mmod{q}$. Note as well that Lemma 3 of \cite{Vau1} yields the bound $S_{m}(q)\ll q\tau_{12}(q)^{46}$, which implies that $\sigma_{m}(p)=1+O(p^{-22})$ and delivers the convergence of the singular series
\begin{equation}\label{SM}\frak{S}_{m}(n)=\sum_{q=1}^{\infty}S_{m}(q)\end{equation} and its upper bound $\frak{S}_{m}(n)\ll 1$. Here the reader may find useful to observe that we implicitly used the multiplicativity of $S_{m}(q)$ and the expression of the singular series as the product $$\frak{S}_{m}(n)=\prod_{p}\sigma_{m}(p).$$The estimate for $S_{m}(q)$ mentioned before (\ref{SM}) also delivers, for $Q\geq 1$, the bound
\begin{equation}\label{ssm}\sum_{q>Q}\lvert S_{m}(q)\rvert\ll Q^{-\alpha}\end{equation} for some $\alpha>0$ via the ideas employed in the proof of Lemma \ref{lema5}. Observe that by Lemmata 2.12, 2.13 and 2.15 of \cite{Vau} one gets for every prime $p\neq 3$ the lower bound $\sigma_{m}(p)\geq p^{-45\gamma}$, where $\gamma=3$ when $p=2$ and $\gamma=1$ otherwise. Likewise, note that when $m\equiv n\mmod{81}$ and $h\geq 5$ orthogonality yields
$$\sum_{l=0}^{h}S_{m}(3^{l})=3^{-45h}M_{n,m}(3^{h}),$$where $M_{n,m}(3^{h})$ denotes the number of solutions of the congruence
$$x_{1}^{12}+\dots+x_{46}^{12}\equiv (n-m)/81\mmod{3^{h-4}}$$with $1\leq x_{i}\leq 3^{h}.$ Therefore, the application of Lemmata 2.13 and 2.15 of \cite{Vau} gives $\sigma_{m}(3)\geq 3^{-86}.$ Consequently, combining these lower bounds with the fact that $\sigma_{m}(p)-1=O(p^{-22})$ we obtain $\frak{S}_{m}(n)\gg 1.$ Observe as well that the preceding discussion yields $\frak{S}_{m}(n)\geq 0$ for every $m\in\mathbb{N}.$

 Before showing a lower bound of the expected size for the contribution of the set of narrow arcs, we introduce for convenience the weighted exponential sum
$$w(\beta)=\sum_{P^{12\eta}<x\leq n}\frac{1}{12}x^{-11/12}\rho\Big(\frac{\log x}{12\eta\log P}\Big)e(\beta x),$$
where $\rho$ denotes the Dickman's function, defined for real $x$ by
$$\rho(x)=0\text{ when } x<0,$$
$$\rho(x)=1 \text{ when } 0\leq x\leq 1,$$
$$\rho \text{ continuous for } x>0,$$
$$\rho \text{ differentiable for } x>1$$
$$x\rho'(x)=-\rho(x-1) \text{ when } x>1.$$For the sake of simplicity, we define the auxiliary function $f^{*}(\alpha)$ by putting $f^{*}(\alpha)=q^{-1}S(q,81a)w\big(81(\alpha-a/q)\big)$ when $\alpha\in\grP(a,q)\subset\grP$ and $f^{*}(\alpha)=0$ for $\alpha\in\grp.$ Then, it is a consequence of Vaughan\footnote{Observe that the condition $(a,q)=1$ in Vaughan \cite[Lemma 5.4]{Vau3} can be relaxed to $(a,q)=C$ for some constant $C$.} \cite[Lemma 5.4]{Vau3} that for $\alpha\in\grP(a,q)\subset\grP$ one has $f(81\alpha)-f^{*}(\alpha)=O\big(PR^{-3}\big)$ and $f^{*}(\alpha)\ll q^{-1/12}P(1+n\lvert\beta\rvert)^{-1/12}.$ Moreover, by the methods of Vaughan \cite[Lemma 2.8]{Vau} and the monotonicity of $\rho$ it follows that
\begin{equation}\label{www}w(\beta)\ll \frac{P}{(1+n\lvert\beta\rvert)^{1/12}}.\end{equation} Finally, when $m\in\mathbb{N}$ it is convenient to introduce $K(m)$, defined as the number of solutions of the equation
$$m=T(p_{1}\mathbf{x}_{1})^{4}+\dots+T(p_{11}\mathbf{x}_{11})^{4}$$ for $\mathbf{x}_{i}\in\mathcal{W}$ and $M/2\leq p_{i}\leq M.$ Combining the estimates mentioned before (\ref{www}) we obtain that
\begin{align*}\int_{\grP}W(\alpha)^{11}f(81\alpha)^{46}e(-\alpha n)d\alpha=&
\sum_{m\leq 11n}K(m)\int_{\grP}f^{*}(\alpha)^{46}e\big(-\alpha(n-m)\big)d\alpha
\\
&+O\big((HM)^{11}P^{34}(\log P)^{-11-\delta}\big).
\end{align*}Observe that the main term on the right can be written as
\begin{equation}\label{main}\sum_{m\leq 11n}K(m)\sum_{q\leq R}S_{m}(q)\int_{\lvert\beta\rvert\leq n^{-1}R}w(81\beta)^{46}e\big(-\beta (n-m)\big)d\beta.\end{equation} By (\ref{www}) we obtain that the integral on the above expression over the range $\lvert\beta\rvert>n^{-1}R$ is $O(P^{34}(\log P)^{-\delta})$. Therefore, an application of this observation and (\ref{ssm}) gives that the contribution of the set of narrow arcs $\grP$ is
$$\sum_{m\leq 11n}K(m)\frak{S}_{m}(n)\int_{0}^{1}w(81\beta)^{46}e\big(-\beta(n-m)\big)d\beta+O\big((HM)^{11}P^{34}(\log P)^{-11-\delta}\big).$$ We further note that whenever $P^{12\eta}< x\leq n$ then $$\rho\Big(\frac{\log x}{12\eta\log P}\Big)\gg 1,$$ so combining the positivity of $\frak{S}_{m}(n)$, orthogonality and the lower bound $\frak{S}_{m}(n)\gg 1$ when $m\equiv n\mmod{81}$ we obtain that (\ref{main}) is bounded below by $$\sum_{\substack{m\leq 11n/12\\ m\equiv n\mmod{81}}}K(m)(n-m)^{17/6}.$$One can check via an application of Hensel's Lemma\footnote{Here the reader may find useful to observe that the set of sums of three cubes modulo $27$ are the residue classes not congruent to $4$ or $5$ modulo $9$.} and Lemma 2.14 of \cite{Vau} that the set of numbers of the shape $T(p_{1}\mathbf{x}_{1})^{4}+\dots+T(p_{11}\mathbf{x}_{11})^{4}$ with $p_{i},\mathbf{x}_{i}\leq 81$ covers all the residue classes modulo $81$. Consequently, by the preceding discussion we find that
$$\int_{\grP}W(\alpha)^{11}f(81\alpha)^{46}e(-\alpha n)d\alpha\gg (HM)^{11}P^{34}(\log P)^{-11},$$ which combined with the estimates obtained through the pruning process yields $R_{4}(n)\gg (HM)^{11}P^{34}(\log P)^{-11}.$

\end{document}